\newcommand{\eps}{\varepsilon}
\newcommand{\epsi}{\varepsilon}
\begin{document}

\title{On the behavior of periodic solutions of \\planar
autonomous Hamiltonian systems \\ with  multivalued periodic
perturbations}

\runtitle{Behavior of periodic solutions of perturbed Hamiltonian
systems}

\author{Oleg Makarenkov, Luisa Malaguti and Paolo Nistri $\,^*$}
\runauthor{Oleg Makarenkov et al.}

\def\thefootnote{\fnsymbol{footnote}}
\footnotetext[1]{Corresponding author.}

\address{Department of Mathematics, Imperial College London,
London, SW7 2AZ, UK; e-mail: o.makarenkov@imperial.ac.uk}

\address{Dipartimento di Scienze e Metodi dell'Ingegneria, Universit\`a di Modena e Reggio Emilia, 42100 Reggio Emilia,
Italy; e-mail: luisa.malaguti@unimore.it}

\address{Dipartimento di Ingegneria dell'Informazione,
Universit\`a di Siena, 53100 Siena, Italy; e-mail:
pnistri@dii.unisi.it}


\abstract{Aim of the paper is to provide a method to analyze the
behavior of $T$-periodic solutions $x_\eps, \eps>0$, of a perturbed
planar Hamiltonian system near a cycle $x_0$,  of smallest period
$T$, of the unperturbed system. The perturbation is represented by a
$T$-periodic multivalued map which vanishes as $\eps\to0$. In
several problems from nonsmooth mechanical systems this multivalued
perturbation comes from the Filippov regularization of a nonlinear
discontinuous $T$-periodic term. \noindent Through the paper,
assuming the existence of a $T$-periodic solution $x_\eps$ for
$\eps>0$ small, under the condition that $x_0$ is a nondegenerate
cycle of the linearized unperturbed Hamiltonian system we provide a
formula for the distance between any point $x_0(t)$ and the
trajectories $x_\eps([0,T])$ along a transversal direction to
$x_0(t).$}

\keywords{planar Hamiltonian systems, characteristic multipliers,
multivalued periodic perturbations, periodic solutions, topological
degree.}


\classification{37K05, 34A60, 34C25}


\maketitle

\newcommand{\abs}[1]{\lvert#1\rvert}

\section{Introduction}

Let $x_0$ be a $T$-periodic cycle of the Hamiltonian system
\begin{equation}\label{np}
  \dot x= f(x),
\end{equation}
where $f\in C^1(\mathbb{R}^2,\mathbb{R}^2).$ In the recent
monographs \cite{awr} and \cite{awr1}, on the basis of numerical
simulations, was heuristically shown that the subharmonic Melnikov's
method  (\cite{mel}, \cite[Chapter 4, \S6]{guck}) correctly predicts
the existence of $T$-periodic solutions $x_\eps$ of the differential
inclusion
\begin{equation}\label{ps}
  \dot x\in f(x) +\eps g(t,x,\eps),
\end{equation}
where $g:\mathbb{R}\times\mathbb{R}^2\times[0,1]\to K(\mathbb{R}^2)$
is a multivalued map taking the values in the set $K(\mathbb{R}^2)$
of nonempty compact and convex sets of $\mathbb{R}^2$. Sufficient
conditions for the local and global existence of at least an
absolutely continuous solution of (\ref{ps}) starting from any
initial condition can be found in (\cite[Chapter 2]{AC}).


\noindent In \cite{awr} and \cite{awr1} the authors have
experimentally observed that if $\theta_0$ is a simple zero of the
subharmonic Melnikov's bifurcation function then (\ref{ps})
possesses a $T$-periodic solution $x_\eps$ such that
\begin{equation}\label{conv}
   x_\eps(t)\to x_0(t+\theta_0)\quad{\rm as}\ \eps\to 0, \quad{\rm
   uniformly\
   in}\ t\in[0,T].
\end{equation}

A theoretical justification of this result can be provided along the
lines of the papers \cite{fec}, \cite{nach} and \cite{jmaa}. In this
paper we do not provide conditions to ensure the existence of
$T$-periodic solutions $x_\eps$, for $\eps>0$ small, instead we want
to evaluate the distance between any point $x_0(t)$ and the curve
$x_\eps([0,T])$ providing in this way a tool to study the behavior
of the $T$-periodic solutions of (\ref{ps}) near $x_0.$ This tool,
together with the method based on the Melnikov's bifurcation
function mentioned above, permits to perform a complete analysis
both for the existence and the behavior near the cycle $x_0$ of the
$T$-periodic solutions $x_\eps$ to (\ref{ps}).

Since in this paper the existence of $T$-periodic solutions $x_\eps$
of (\ref{ps}) is assumed, we only require to the multivalued map $g$
the minimal regularity assumptions needed for our analysis. In fact,
through the paper we only assume that the map
$g:\mathbb{R}\times\mathbb{R}^2\times[0,1]\to K(\mathbb{R}^2)$ is
measurable or upper semicontinuous.

The interest of considering multivalued perturbation of system
(\ref{np}) is mainly related to the necessity, encountered in the
applications, to deal with  perturbations, having jump
discontinuities, of Hamiltonian autonomous systems. In fact, many
physical problems are modeled by ordinary differential equations
with discontinuous right hand side whose regularization produces a
multivalued map (see for instance \cite{fil1} and \cite{AC}). Among
them we like to cite the study of the self-sustained oscillations
induced by friction in one-degree of freedom mechanical systems.
This problem gives rise to a planar Hamiltonian system perturbed by
a periodic perturbation of small amplitude with jump
discontinuities, compare e.g. \cite[Chapter 15]{awr} where the
analysis was heuristically performed by means of the Melnikov
method.

The paper is organized as follows. In Section 2 assuming that the
linearized system
\begin{equation}\label{ls}
  \dot y=f'(x_0(t))y
\end{equation}
possesses a not $T$-periodic solution, in this case following
Rhouma-Chicone \cite{chic} $x_0$ is said to be nondegenerate, we
show the existence of a family $\{\Delta_\eps\}_{\eps>0}$ of real
numbers with $\Delta_\eps\to 0$ as $\eps\to 0$ such that
\begin{equation}\label{sqr1}
  \frac{\left\|x_\epsi(t+\Delta_\epsi)- {x}_0(t)\right\|}{\epsi}\le const\quad{\rm
  for\ any\ }t\in[0,T]{\rm\ and\ any\ }\epsi>0.
\end{equation}
This property has been already established by the authors in
\cite{jmaa} and \cite{cpaa} in the case when $x_0$ is an isolated
limit cycle and $g$ in (\ref{ps}) is a singlevalued continuous
function. In section~3 we employ property (\ref{sqr1}) together with
a suitably defined multivalued function $M^\bot\in
C^0(\mathbb{R},\mathbb{R})$ to obtain
\begin{equation}\label{fa}
  x_\eps(t+\Delta_\eps)-x_0(t)\in\eps M^\bot(t)y(t)+\alpha_\eps(t)\dot x_0(t)+o(\eps),
\end{equation}
where $y$ is a not $T$-periodic solution of the linearized system
(\ref{ls}) and $\alpha_\eps(t)$ is a scalar function  infinitesimal
as $\eps\to 0$ of order greater or equal to $1$. The function
$\alpha_\eps(t)$ is given in the formula (\ref{scalar}) of the
paper. The formula to represent the function $M^\bot$ is provided in
Section~3, thus (\ref{fa}) gives an explicit formula for the
distance between the trajectories $x_0$ and $x_\eps$ along a
transversal direction to $x_0$. Finally, in Section~4 we specialize
the formula for $M^\bot$ in the case when the Hamiltonian system
(\ref{np}) possesses symmetry properties, as often is the case in
the applications.

\section{Evaluation of the distance
between the periodic solutions of the perturbed system and the cycle
of the unperturbed one}

\noindent In this Section we establish the validity of inequality
(\ref{sqr1}) which is the starting point for (\ref{fa}). This result
does not depend on the perturbation term $g,$ indeed the only
property we need is the following one.

\begin{definition} (\cite{chic}) We say that the cycle ${x}_0$
of autonomous system (\ref{np}) is nondegenerate if the linearized
system (\ref{ls}) has a not $T$-periodic solution.
\end{definition}

If (\ref{np}) is Hamiltonian then the nondegeneracity of $x_0$
implies that the period $T$ of $x_0$ is noncritical (compare
\cite{crit1}).

\begin{definition} (\cite[Definition~2.2.1]{kunze}) A function $x:[0,T]\to\mathbb{R}^2$ is
said to be a solution of the differential inclusion (\ref{ps}) on
$[0,T]$ if $x$ is absolutely continuous and the inclusion in
(\ref{ps}) holds for almost all (a.a.) $t\in[0,T].$
\end{definition}

\begin{definition} (\cite[Definition~1.3.1]{koz}) For any $\eps>0$ the multivalued
map $g(\cdot,\eps):\mathbb{R}\times\mathbb{R}^2\to K(\mathbb{R}^2)$
is said to be measurable if, for any open $V\subset\mathbb{R}^2$,
the set $g^{-1}(V,\eps):=\{ (t,x)\in \mathbb{R}\times\mathbb{R}^2 \,
: \, g(t,x, \eps) \cap V \ne \emptyset \}$ is measurable.
\end{definition}

\vskip0.2truecm We assume the following condition.

\vskip0.3truecm\noindent{\bf (H)}$\;-\;$ for any bounded set
$B\subset \mathbb{R}^2$ there exists $\mu_B\in
L^\infty_{loc}(\mathbb{R})$ such that
$$
\Vert g(t,x,\eps) \Vert:= \mbox{sup}\{\Vert y \Vert: y\in
g(t,x,\eps)\} \le \mu_B(t)
$$
for all $t \in \mathbb{R}, \, x \in B$ and  $\eps \in [0,1].$

\vskip0.3truecm Note that the notion of nondegenerate cycles has
been used in \cite{jmaa} and \cite{cpaa} in a stronger sense, i.e.
$x_0$ is called nondegenerate if the linearized system (\ref{ls})
has only one characteristic multiplier equal to $+1$.

\noindent In order to introduce the family
$\{\Delta_\epsi\}_{\eps>0},$ following \cite{cpaa}, we define a
surface $S\in C(\mathbb{R},\mathbb{R}^2)$ as follows
\begin{equation}\label{suf}
\begin{array}{lll}
S(v)&=&\Omega(T,0,h(v)),\\
h(v)&=&{x}_0(0)+A_1 v,
\end{array}
\end{equation}
where $\Omega(\cdot,t_0,\xi)$ is the solution of (\ref{np})
satisfying $\Omega(t_0,t_0,\xi)=\xi$ and $A_1$ is an arbitrary
$2\times 1$ vector such that the $2 \times 2$ matrix $(\dot
x_0(0),A_1)$ is nonsingular.

\vskip0.3truecm The following result shows that the surface $S$
intersects $x_0$ transversally.

\begin{lemma} (\cite[Lemma~2.2]{cpaa}) \label{conv_lem2}
Assume $f\in C^1(\mathbb{R}^2,\mathbb{R}^2).$ Let $x_0$ be a
nondegenerate $T$-periodic cycle of (\ref{np}). Then
$\dot{{x}}_0(0)\not\in S'(0)(\mathbb{R}).$
\end{lemma}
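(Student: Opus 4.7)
The plan is to compute $S'(0)$ explicitly from the definition and then exploit the invertibility of the monodromy matrix. Applying the chain rule to $S(v)=\Omega(T,0,x_0(0)+A_1 v)$ at $v=0$ gives
\[
  S'(0) \;=\; D_\xi\Omega(T,0,x_0(0))\,A_1,
\]
and the standard theory of dependence of the flow on initial data identifies $D_\xi\Omega(T,0,x_0(0))$ with the monodromy matrix $X(T)$, where $X(\cdot)$ is the fundamental matrix solution of the linearized system (\ref{ls}) normalized by $X(0)=I$. Thus $S'(0)(\mathbb{R})$ is the one-dimensional subspace spanned by the vector $X(T)A_1$.

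Next I would collect two elementary facts about the tangent vector $\dot{x}_0(0)$. Differentiating (\ref{np}) shows that $\dot{x}_0$ is itself a solution of (\ref{ls}), and the $T$-periodicity of $x_0$ then gives $X(T)\dot{x}_0(0)=\dot{x}_0(0)$, so $\dot{x}_0(0)$ is a fixed vector of the monodromy. Moreover, $\dot{x}_0(0)\ne 0$, since otherwise $x_0(0)$ would be an equilibrium of (\ref{np}) and uniqueness of solutions would force $x_0$ to be constant, contradicting the fact that $T$ is the smallest period of the cycle.

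Combining these ingredients, I would argue by contradiction. If $\dot{x}_0(0)\in S'(0)(\mathbb{R})$, then $\dot{x}_0(0)=r\,X(T)A_1$ for some $r\in\mathbb{R}$, and necessarily $r\ne 0$ since $\dot{x}_0(0)\ne 0$. Using the fixed-vector relation one rewrites this as $X(T)\bigl(\dot{x}_0(0)-rA_1\bigr)=0$, and the invertibility of the fundamental matrix $X(T)$ then forces $\dot{x}_0(0)=rA_1$. This contradicts the hypothesis that the matrix $(\dot{x}_0(0),A_1)$ is nonsingular, completing the proof.

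I do not foresee any substantive obstacle: the whole argument reduces to the observation that the invertible linear map $X(T)$ sends the linearly independent pair $(\dot{x}_0(0),A_1)$ to the linearly independent pair $(\dot{x}_0(0),S'(0))$. It is worth noting that neither the Hamiltonian structure of (\ref{np}) nor the nondegeneracy of $x_0$ is actually used in this particular transversality statement; those hypotheses will enter only in the subsequent construction of the family $\{\Delta_\eps\}_{\eps>0}$ and in the derivation of (\ref{fa}).
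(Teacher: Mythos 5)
Your proof is correct. The paper does not reproduce a proof of this lemma---it simply cites \cite[Lemma~2.2]{cpaa}---but your argument is the natural one and fills the gap: $S'(0)=D_\xi\Omega(T,0,x_0(0))A_1=X(T)A_1$ with $X(\cdot)$ the principal fundamental matrix of the variational equation (\ref{ls}); $\dot x_0$ is a $T$-periodic solution of (\ref{ls}), so $\dot x_0(0)\neq 0$ is a fixed vector of $X(T)$; and since the invertible linear map $X(T)$ carries the linearly independent pair $(\dot x_0(0),A_1)$ to $(\dot x_0(0),X(T)A_1)$, the latter remains linearly independent, i.e.\ $\dot x_0(0)\notin S'(0)(\mathbb{R})$. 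Your closing remark is also right: the argument uses neither the Hamiltonian structure of (\ref{np}) nor the nondegeneracy of $x_0$, so the transversality $\dot x_0(0)\notin S'(0)(\mathbb{R})$ holds for any nonconstant $T$-periodic orbit of a $C^1$ planar autonomous field; those hypotheses appear in the lemma's statement only because they are standing assumptions carried over from \cite{cpaa} and from the rest of the paper, where they do real work (e.g.\ in Lemma~\ref{conv_cor1} and Theorem~\ref{thmI}).
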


\vskip0.3truecm Using the previous Lemma we can prove the following
result.

\begin{lemma}\label{conv_cor1}
Assume $f\in C^1(\mathbb{R}^2,\mathbb{R}^2)$ and that
$g:\mathbb{R}\times\mathbb{R}^2\times[0,1]\to K(\mathbb{R}^2)$ is
measurable and  satisfying (H). Let $x_0$ be a nondegenerate
$T$-periodic cycle of (\ref{np}). Let $x_\varepsilon$ be a
$T$-periodic solution to perturbed system (\ref{ps}) satisfying
$$\|x_\eps(t)-x_0(t)\|\to 0$$ as $\eps\to 0$ uniformly with respect to
$t\in\mathbb{R},$ then there exists $\varepsilon_0>0$ and $r_0>0$
such that for any $\varepsilon\in(0,\varepsilon_0]$ the equation
$x_\varepsilon(\Delta)=S(v)$ has a unique solution
$(\Delta_\varepsilon,v_\varepsilon)$ in
$[-r_0,r_0]\times\{v\in\mathbb{R}:|v|\le r_0\}.$ Moreover, the
functions $\varepsilon\to\Delta_\varepsilon,$  $\varepsilon\to
v_\varepsilon$ are continuous at $\varepsilon=0$ with $\Delta_0=0$
and $v_0=0.$
\end{lemma}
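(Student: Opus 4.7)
The plan is to view the equation $x_\varepsilon(\Delta)=S(v)$ as a continuous perturbation of $x_0(\Delta)=S(v)$ and to apply an inverse-function-theorem-type argument near $(\Delta,v)=(0,0)$. Setting $F_\varepsilon(\Delta,v):=x_\varepsilon(\Delta)-S(v)$, the $T$-periodicity of $x_0$ gives $F_0(0,0)=x_0(0)-\Omega(T,0,x_0(0))=0$, while the Jacobian of $F_0$ at the origin is the $2\times 2$ matrix with columns $\dot x_0(0)$ and $-S'(0)$. By Lemma~\ref{conv_lem2} this matrix is nonsingular, so the classical inverse function theorem yields $r_0>0$ such that $F_0$ restricted to $\overline{B_{r_0}(0,0)}$ is a homeomorphism onto its image. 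In particular, $(0,0)$ is the only zero of $F_0$ in $\overline{B_{r_0}(0,0)}$ and $\|F_0(\Delta,v)\|\ge c>0$ on $\partial B_{r_0}(0,0)$ for some $c>0$.

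For $\varepsilon>0$, hypothesis (H) implies that the family $\{x_\varepsilon\}$ is uniformly Lipschitz on bounded time intervals, so the maps $F_\varepsilon$ are continuous on $\overline{B_{r_0}(0,0)}$ with $\|F_\varepsilon-F_0\|_\infty\le\|x_\varepsilon-x_0\|_\infty\to 0$. Shrinking $\varepsilon_0$ so that $F_\varepsilon$ stays bounded away from zero on $\partial B_{r_0}(0,0)$, the homotopy invariance of the Brouwer degree gives $\deg(F_\varepsilon,B_{r_0}(0,0),0)=\deg(F_0,B_{r_0}(0,0),0)=\pm 1$, hence a zero $(\Delta_\varepsilon,v_\varepsilon)\in B_{r_0}(0,0)$. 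Continuity at $\varepsilon=0$ with $(\Delta_0,v_0)=(0,0)$ is then standard: any subsequential limit $(\Delta_*,v_*)$ of $(\Delta_{\varepsilon_n},v_{\varepsilon_n})$ satisfies $F_0(\Delta_*,v_*)=0$ by uniform convergence, so $(\Delta_*,v_*)=(0,0)$ by the base case, and the whole family converges to $(0,0)$.

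The main obstacle is uniqueness, because for $\varepsilon>0$ the map $F_\varepsilon$ is only absolutely continuous in $\Delta$ and the inverse function theorem is not directly applicable. I would handle this by a quantitative linearization. If $(\Delta^1,v^1),(\Delta^2,v^2)\in\overline{B_{r_0}(0,0)}$ both solve $F_\varepsilon=0$, then absolute continuity yields
\[
\int_{\Delta^2}^{\Delta^1}\dot x_\varepsilon(s)\,ds = S(v^1)-S(v^2).
\]
Using $\dot x_\varepsilon(s)\in f(x_\varepsilon(s))+\varepsilon g(s,x_\varepsilon(s),\varepsilon)$, the continuity of $f$, the uniform convergence $x_\varepsilon\to x_0$, and the bound (H), one obtains a.e.\ on $[-r_0,r_0]$ that $\dot x_\varepsilon(s)=\dot x_0(0)+\eta(\varepsilon,r_0)$ with $\eta(\varepsilon,r_0)\to 0$ as $(\varepsilon,r_0)\to(0,0)$. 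Combining this with $S(v^1)-S(v^2)=(v^1-v^2)S'(0)+o(|v^1-v^2|)$ produces the $\mathbb{R}^2$-valued identity
\[
(\Delta^1-\Delta^2)\dot x_0(0)-(v^1-v^2)S'(0) = o\bigl(|\Delta^1-\Delta^2|+|v^1-v^2|\bigr),
\]
and the invertibility of the $2\times 2$ matrix $(\dot x_0(0),-S'(0))$ then forces $(\Delta^1,v^1)=(\Delta^2,v^2)$ once $r_0$ and $\varepsilon_0$ are chosen small enough.
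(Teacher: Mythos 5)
Your proof is correct and follows the paper's overall structure: both establish existence and continuity of $(\Delta_\eps,v_\eps)$ via the Brouwer degree of $F_\eps(\Delta,v)=x_\eps(\Delta)-S(v)$ on a small box around the origin, using the nonsingularity of $F'_0(0,0)=(\dot x_0(0),-S'(0))$ supplied by Lemma~\ref{conv_lem2}, and both rule out a second solution by exploiting this same transversality together with the fact that $\dot x_\eps(s)$ is, for small $\eps$ and small $|s|$, uniformly close to $\dot x_0(0)$. The one place where you genuinely diverge is the form of the uniqueness argument. The paper argues by contradiction with an angle computation: assuming two distinct small solutions $(\Delta_{\eps_k},v_{\eps_k})\ne(\widetilde{\Delta}_{\eps_k},\widetilde{v}_{\eps_k})$, it writes $x_{\eps_k}(\Delta_{\eps_k})-x_{\eps_k}(\widetilde{\Delta}_{\eps_k})=S(v_{\eps_k})-S(\widetilde{v}_{\eps_k})$, passes to a limit along normalized increments $q_k$ to show this vector makes an angle bounded below away from zero with $\dot x_0(0)$ (by Lemma~\ref{conv_lem2}), while Filippov's lemma plus the integral representation of $x_{\eps_k}$ make the same vector asymptotically parallel to $f(x_0(0))=\dot x_0(0)$. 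You replace this by a direct quantitative linearization: from $\int_{\Delta^2}^{\Delta^1}\dot x_\eps = S(v^1)-S(v^2)$ you obtain $(\Delta^1-\Delta^2)\dot x_0(0)-(v^1-v^2)S'(0) = o\bigl(|\Delta^1-\Delta^2|+|v^1-v^2|\bigr)$, which the invertibility of $(\dot x_0(0),-S'(0))$ forbids unless the two solutions coincide. Your version is a bit more streamlined, avoids introducing the angle function and the auxiliary limit $q_k$, and you also do not need to invoke Filippov's lemma explicitly since the a.e.\ bound on $\dot x_\eps$ follows from the definition of solution of the inclusion and (H). Two small notational caveats, neither of which affects validity: the error you write as $\eta(\eps,r_0)$ must be allowed to depend on $s$ as well — what the estimate really gives is an $L^\infty([-r_0,r_0])$ bound tending to $0$ as $(\eps,r_0)\to(0,0)$ — and the $o(|v^1-v^2|)$ in the expansion of $S$ needs to be uniform for $r_0$ small, which it is because $S'$ is continuous.
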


In the case when $g$ in (\ref{ps}) is singlevalued and continuous
Lemma~\ref{conv_cor1} is a simple consequence of
Lemma~\ref{conv_lem2} (\cite[Corollary~2.3]{cpaa}). In the present
case of $g$ multivalued map we should provide a proof.

\

\begin{proof} Define the function $F:
\mathbb{R}^2\times[0,1]\to \mathbb{R}^2$ as follows
$$
  F((t,v),\eps)=x_\eps(t)-S(v),
$$
then $F((0,0),0)=0.$  Our assumptions and definitions (\ref{suf})
guarantee that $F$ is a continuous function at the points
$\mathbb{R}^2\times\{0\}.$ Since $F(\cdot,0)$ is differentiable at
$(0,0)$ and $F'_{(t,v)}((0,0),0)=(\dot x_0(0),-S'(0))$ is
nonsingular by Lemma 1, then there exists $r_0>0$ such that
$$
  d(F(\cdot,0),[-r_0,r_0]\times[-r_0,r_0],0)\not=0,
$$
here $d(\Phi,V,0)$ denotes the topological degree of the map $\Phi$
in the set $V$ with respect to $0$.

\noindent Therefore, there exists $\eps_0>0$ such that
$$
  d(F(\cdot,\eps),[-r_0,r_0]\times[-r_0,r_0],0)\not=0,\quad{\rm for\
  any\ }\eps\in[0,\eps_0].
$$
This implies that for any $\eps\in[0,\eps_0],$ by the solution
property of the topological degree, there exists at least one pair
$(\Delta_\eps,v_\eps)\in[-r_0,r_0]\times[-r_0,r_0]$ such that
$x_\eps(\Delta_\eps)-S(v_\eps)=0.$

Let us show that this solution is unique in
$[-r_0,r_0]\times[-r_0,r_0]$ provided that $r_0>0$ and $\eps_0>0$
are sufficiently small. Assume the contrary, hence there exist
$\eps_k\to 0$ as $k\to\infty$ and
$(\widetilde{\Delta}_{\eps_k},\widetilde{v}_{\eps_k})\to (0,0)$ as
$k\to\infty$ such that
$$
  x_{\eps_k}(\widetilde{\Delta}_{\eps_k})-S(\widetilde{v}_{\eps_k})=0\quad{\rm
  and}\quad
  (\widetilde{\Delta}_{\eps_k},\widetilde{v}_{\eps_k})\not=(\Delta_{\eps_k},v_{\eps_k}),\quad{\rm
  for\ any\ }k\in\mathbb{N}.
$$
Since $S:[-r_0,r_0]\to S([-r_0,r_0])$ is invertible then
$(\widetilde{\Delta}_{\eps_k},\widetilde{v}_{\eps_k}) \not=
(\Delta_{\eps_k},v_{\eps_k})$ implies
$\widetilde{\Delta}_{\eps_k}\not=\Delta_{\eps_k}$,  say
$\widetilde{\Delta}_{\eps_k}<\Delta_{\eps_k}$. On the other hand
$\dot x(0)\not= 0$ and so we can assume $\widetilde{v}_{\eps_k}\not=
v_{\eps_k}$. For any $v_1,v_2\in\mathbb{R}^2$ we define
$\angle(v_1,v_2)$ as follows
$$
  \angle(v_1,v_2)=\arccos \frac{\left<v_1,v_2\right>}{\|v_1\|\cdot\|v_2\|}.
$$
Then we have
$$
  \angle(x_{\eps_k}(\Delta_{\eps_k})-x_{\eps_k}(\widetilde{\Delta}_{\eps_k}),\dot
  x_0(0))=\angle(S(v_{\eps_k})-S(\widetilde{v}_{\eps_k}),\dot
  x_0(0)).
$$
Passing to a subsequence if necessary we have that
$\left\{\dfrac{v_{\eps_k}-\widetilde{v}_{\eps_k}}{|v_{\eps_k}-\widetilde{v}_{\eps_k}|}\right\}_{k=1}^\infty$
converges. Denote by $q\in\mathbb{R}, |q|=1,$ the limit of this
sequence. Then
$$
  \angle(S(v_{\eps_k})-S(\widetilde{v}_{\eps_k}),\dot
  x_0(0))\to\angle(S'(0)q,\dot x_0(0))\quad{\rm as}\ k\to\infty,
$$
with $\angle(S'(0)q,\dot x_0(0))\not=0,$ since, by Lemma 1, $\dot
x(0)\not\in S'(0)(\mathbb{R})$.  Therefore, there exists $\alpha>0$
such that
\begin{equation}\label{A}
  \left|\angle(x_{\eps_k}(\Delta_{\eps_k})-x_{\eps_k}(\widetilde{\Delta}_{\eps_k}),\dot
  x_0(0))\right|\ge\alpha>0,\quad{\rm for\ any\ }k\in\mathbb{N}.
\end{equation}
Since $t\to x_{\eps_k}(t)$ is a solution of (\ref{ps}) then, by
Filippov's lemma (\cite{fil} or \cite[Theorem~1.5.10]{obu}), there
exists a singlevalued measurable function
$h_{\eps_k}:[0,T]\to\mathbb{R}^2$ such that
\begin{eqnarray*}
  \dot x_{\eps_k}(t) &=&f(x_{\eps_k}(t))+{\eps_k} h_{\eps_k}(t),\quad{\rm for \ a.a.\
  }t\in[0,T],\\
  h_{\eps_k}(t)&\in & g(t,x_{\eps_k}(t),{\eps_k}),\quad{\rm for\ a.a.\
  }t\in[0,T].
\end{eqnarray*}
Therefore
$$
  x_{\eps_k}(\Delta_{\eps_k})-x_{\eps_k}(\widetilde{\Delta}_{\eps_k})=
  \int_{\widetilde{\Delta}_{\eps_k}}^{\Delta_{\eps_k}}
  f(x_{\eps_k}(\tau))d\tau+\eps_k
  \int_{\widetilde{\Delta}_{\eps_k}}^{\Delta_{\eps_k}}h_{\eps_k}(\tau)d\tau.
$$
Due to the uniform convergence of $x_\eps$ to  $x_0$ as $\eps\to 0$
we have that
$$\mbox{sup}_{k\in N}\{\|x_{\eps_k}(\tau)\|: \tau\in
[0, T]\}<\infty$$
thus the assumptions on $f$ and $g$ permit to
conclude that
$$
  \angle(x_{\eps_k}(\Delta_{\eps_k})-x_{\eps_k}(\widetilde{\Delta}_{\eps_k}),\dot
  x_0(0))\to \angle(f(x_0(0)),\dot x_0(0)) \quad{\rm as\ }k\to
  \infty,
$$
hence $\angle(f(x_0(0)),\dot x_0(0))=0$ since $f(x_0(0))=\dot
x_0(0)$. This is a contradiction with (\ref{A}) and so the proof is
complete.\end{proof}

 \vskip0.3cm

We are now in the position to prove inequality (\ref{sqr1}).

\begin{theorem}\label{thmI} Assume $f\in
C^1(\mathbb{R}^2,\mathbb{R}^2)$ and
$g:\mathbb{R}\times\mathbb{R}^2\times[0,1]\to K(\mathbb{R}^2)$ is
measurable and  satisfying (H).
 Let $x_\varepsilon$ be a $T$-periodic solution to the
perturbed system (\ref{ps}) satisfying
\begin{equation}\label{conv}
  \|x_\varepsilon(t)- {x}_0(t)\|\to 0\mbox{\quad as}\quad\varepsilon\to
  0
\end{equation}
uniformly with respect to $t\in[0,T],$ where ${x}_0$ is a
nondegenerate $T$-periodic cycle of the unperturbed system
(\ref{np}). Let $\epsi_0>0$ and
$\{\Delta_\epsi\}_{\epsi\in(0,\epsi_0]}\subset \mathbb{R}$ be as in
Lemma~\ref{conv_cor1}. Then there exists $M>0$ such that
\begin{equation}\label{IN}
\|x_\varepsilon(t+\Delta_\varepsilon)- {x}_0(t)\|\le
M\epsi\quad\mbox{ for\ any\ }t\in[0,T] \mbox{ and\ any\ }\
\epsi\in(0,\epsi_0].
\end{equation}
\end{theorem}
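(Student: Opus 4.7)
My plan is to exploit the identity $x_\eps(\Delta_\eps) = S(v_\eps) = \Omega(T, 0, x_0(0) + A_1 v_\eps)$ from Lemma~\ref{conv_cor1} to pair the shifted trajectory $x_\eps(\cdot + \Delta_\eps)$ with a well-chosen unperturbed trajectory, and then to use the $T$-periodicity of $x_\eps$ together with a backward Gronwall estimate to transfer an $O(\eps)$ bound from time $T$ to the whole interval $[0,T]$.

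First I introduce the reference trajectory $z_\eps(t):=\Omega(t,0,x_0(0)+A_1 v_\eps)$ of the unperturbed system (\ref{np}). By construction $z_\eps(0)=x_0(0)+A_1 v_\eps$ and, crucially, $z_\eps(T)=S(v_\eps)=x_\eps(\Delta_\eps)$. Setting $w_\eps(t):=x_\eps(t+\Delta_\eps)-z_\eps(t)$, the $T$-periodicity of $x_\eps$ yields $w_\eps(T)=0$. A Filippov-type measurable selection (already invoked in the proof of Lemma~\ref{conv_cor1}) makes $x_\eps(\cdot+\Delta_\eps)$ a solution of $\dot x=f(x)+\eps h_\eps(t)$ with $\|h_\eps(t)\|\le \mu(t)$; indeed, by the uniform convergence hypothesis every $x_\eps([0,T])$ lies in a common bounded set $B$, so assumption (H) supplies a single $L^\infty_{\rm loc}$ bound $\mu=\mu_B$ independent of small $\eps$. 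With a Lipschitz constant $L$ for $f$ on $B$, one has $\|\dot w_\eps(t)\|\le L\|w_\eps(t)\|+\eps\mu(t)$ almost everywhere, so Gronwall applied \emph{backwards} from $t=T$ (using $w_\eps(T)=0$) produces a constant $K>0$ with $\|w_\eps(t)\|\le K\eps$ for every $t\in[0,T]$.

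Evaluating this at $t=0$ gives $\|x_\eps(\Delta_\eps)-x_0(0)-A_1 v_\eps\|\le K\eps$. Since $S(0)=x_0(0)$, the expansion $S(v_\eps)-S(0)=S'(0)v_\eps+o(v_\eps)$ together with $S'(0)=M_T A_1$, where $M_T$ denotes the monodromy matrix of (\ref{ls}), rewrites this as $\|(M_T-I)A_1\|\cdot|v_\eps|\le K\eps+o(v_\eps)$. Now $\dot x_0$ is a $T$-periodic solution of (\ref{ls}), so $\dot x_0(0)\in\ker(M_T-I)$; and the nondegeneracy of $x_0$ forces $M_T\ne I$, which in the planar Hamiltonian case means $\ker(M_T-I)=\mbox{span}\{\dot x_0(0)\}$. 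The nonsingularity of $(\dot x_0(0),A_1)$ therefore ensures $(M_T-I)A_1\ne 0$. Since $v_\eps\to 0$ by Lemma~\ref{conv_cor1}, the $o(v_\eps)$ term can be absorbed on the left for $\eps$ small, yielding $|v_\eps|\le K'\eps$.

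Combining the two estimates, $\|x_\eps(t+\Delta_\eps)-x_0(t)\|\le \|w_\eps(t)\|+\|z_\eps(t)-x_0(t)\|\le K\eps+C|v_\eps|\le M\eps$ --- the bound $\|z_\eps(t)-x_0(t)\|\le C|v_\eps|$ being standard continuous dependence of the unperturbed flow on initial data --- which is exactly (\ref{IN}). The main obstacle in this scheme is the extraction of $|v_\eps|=O(\eps)$ from the boundary identity at $t=0$; it is here that both the transversality granted by Lemma~\ref{conv_lem2} and the nondegeneracy of $x_0$ (ensuring $M_T\ne I$) are genuinely used. The Gronwall step and the final triangle inequality are then routine.
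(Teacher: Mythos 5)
Your proposal is correct and reaches the same conclusion by essentially equivalent mechanics, but it packages the argument in a genuinely cleaner way. The paper pulls $x_\eps(\cdot+\Delta_\eps)$ back through the unperturbed flow, setting $\nu_\eps(\tau)=\Omega(0,\tau,x_\eps(\tau+\Delta_\eps))$, writes $\nu_\eps=x_0(0)+\eps\mu_\eps$, establishes $\eps\mu_\eps(T)=A_1v_\eps$, and then needs \emph{two} separate estimates: a renormalization-and-contradiction argument (from (\ref{bisp})--(\ref{ff})) to get $|v_\eps|=O(\eps)$, plus an oscillation estimate (\ref{estt1}) for $\nu_\eps(T)-\nu_\eps(0)$, before a final forward Gronwall from $t=0$. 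You instead compare $x_\eps(\cdot+\Delta_\eps)$ with the unperturbed trajectory $z_\eps(t)=\Omega(t,0,x_0(0)+A_1v_\eps)$, which by construction matches $x_\eps(\cdot+\Delta_\eps)$ at $t=T$; a single backward Gronwall then yields the uniform $O(\eps)$ bound on $w_\eps=x_\eps(\cdot+\Delta_\eps)-z_\eps$ over all of $[0,T]$ in one shot. You also replace the paper's renormalization/contradiction with a direct observation: since $\dot x_0(0)$ spans the $1$-eigenspace of the monodromy $M_T$ (nondegeneracy forces $M_T\neq I$, so that eigenspace is one-dimensional, Hamiltonian or not) and $A_1$ is chosen transversal to it, $(M_T-I)A_1\neq 0$, so the Taylor expansion of $S$ at $0$ immediately yields $|v_\eps|\le K'\eps$. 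This is the same underlying fact as the paper's renormalization step, but stated positively rather than by contradiction. Your route buys a shorter, more transparent estimate cascade; the paper's pull-back $\nu_\eps$ is the mildly heavier apparatus it reuses (in linearized form) in the proof of Theorem~\ref{conv_thm1}, so the authors presumably preferred to introduce it early. The one cosmetic caveat in your write-up is the phrase ``in the planar Hamiltonian case'' when arguing that $\ker(M_T-I)=\mathrm{span}\{\dot x_0(0)\}$: that conclusion needs only nondegeneracy in two dimensions, and indeed Remark~2 in the paper notes Theorem~\ref{thmI} does not require (\ref{np}) to be Hamiltonian.
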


\begin{proof} In the sequel $\epsi\in(0,\epsi_0]$ and
$\tau\in[0,T].$ Consider the change of variables
$\nu_\varepsilon(\tau)=\Omega(0,\tau,x_\varepsilon(\tau+\Delta_\varepsilon))$
in system (\ref{ps}). Observe that
\begin{equation}\label{PP}
x_\varepsilon(\tau+\Delta_\varepsilon)=\Omega(\tau,0,\nu_\varepsilon(\tau)).
\end{equation}
Taking the derivative in (\ref{PP}) with respect to $\tau$ we obtain
\begin{equation}\label{ob1}
 \dot x_\varepsilon(\tau+\Delta_\varepsilon)=
 f(\Omega(\tau,0,\nu_\varepsilon(\tau)))+\Omega'_{\xi}(\tau,0,
 \nu_\varepsilon(\tau)) \dot \nu_\varepsilon(\tau).
\end{equation}
On the other hand from (\ref{ps}) we have
\begin{equation}\label{ob2}
  \dot
  x_\varepsilon(\tau+\Delta_\varepsilon)\in
  f(\Omega(\tau,0,\nu_\varepsilon(\tau)))+\varepsilon
  g(\tau+\Delta_\varepsilon,
  \Omega(\tau,0,\nu_\varepsilon(\tau)),\varepsilon).
\end{equation}
Since $\Omega'_{\xi} (\tau,0,\nu_\varepsilon(\tau))$ is the
fundamental matrix of a linear system thus it is invertible, then
from (\ref{ob1}) and (\ref{ob2}) it follows
$$
  \dot \nu_\varepsilon(\tau)\in\varepsilon\left(\Omega'_{\xi}
  (\tau,0,\nu_\varepsilon(\tau))\right)^{-1}g(\tau+\Delta_\varepsilon
  ,\Omega(\tau,0,\nu_\varepsilon(\tau)),\varepsilon),
$$
and
$$
  \nu_\varepsilon(0)=x_\varepsilon(\Delta_\varepsilon)=x_\varepsilon(T+\Delta_\varepsilon)=
  \Omega(T,0,\nu_\varepsilon(T)).
$$
Since $g$ is measurable then again by Filippov's lemma there exists
a measurable singlevalued function $h_\eps:[0,T]\to\mathbb{R}^2$
such that
$$
  h_\eps(\tau)\in \left(\Omega'_{\xi}
  (\tau,0,\nu_\varepsilon(\tau))\right)^{-1}g(\tau+\Delta_\varepsilon
  ,\Omega(\tau,0,\nu_\varepsilon(\tau)),\varepsilon),\quad{\rm
  for\ a.a.\ }\tau\in[0,T]
$$
and
$$
  \dot \nu_\eps(\tau)=\eps h_\eps(\tau),\quad{\rm
  for\ a.a.\ }\tau\in[0,T].
$$
Therefore, $h_\eps\in L^\infty([0,T],\mathbb{R}^2)$ and

\begin{equation}\label{ob3}
  \nu_\varepsilon(\tau)=\Omega(T,0,\nu_\varepsilon(T))+\varepsilon\int\limits_0^\tau
  h_\eps(s)
  ds,\quad{\rm for\ any\ }\tau\in[0,T].
\end{equation}
Since, for any $\tau\ge 0,$ $\nu_\varepsilon(\tau)\to {x}_0(0)$ as
$\varepsilon\to 0$ we can write $\nu_\varepsilon(\tau)$ in the
following form
\begin{equation}\label{rep}
  \nu_\varepsilon(\tau)={x}_0(0)+\varepsilon
  \mu_\varepsilon(\tau).
\end{equation}
We now prove that the functions $\mu_\varepsilon$ are bounded on
$[0,T]$ uniformly with respect to $\varepsilon\in(0,\varepsilon_0].$
For this, we first subtract ${x}_0(0)$ from both sides of
(\ref{ob3}), with $\tau=T,$ obtaining
\begin{eqnarray}
\varepsilon\mu_\varepsilon(T)&=&\varepsilon\,\Omega'_{\xi}(T,0,{x}_0(0))
\mu_\varepsilon(T)+o(\varepsilon\mu_\varepsilon(T))
+\varepsilon\,\int_0^T h_\eps(s)
  ds,\label{ob4}
\end{eqnarray}
where, from (\ref{rep}),
$\dfrac{o(\epsi\mu_\epsi(T))}{\|\epsi\mu_\epsi(T)\|}\to 0$ as
$\epsi\to 0.$

\vskip0.2truecm\noindent Since $x_\varepsilon(\Delta_\varepsilon)\in
S\left(\{v\in\mathbb{R}:|v|\le r_0\right),$ then by
Lemma~\ref{conv_cor1} there exists $v_\varepsilon\in \mathbb{R},$
$|v_\epsi|\le r_0,$ such that
\begin{equation}\label{SP}
x_\varepsilon(\Delta_\varepsilon)= \Omega(T,0,x_0(0)+A_1 v_\epsi)
\end{equation}
and
\begin{equation}\label{convr}
v_\varepsilon\to 0\quad{\rm as}\quad \varepsilon\to 0.
\end{equation}
Now by using (\ref{SP}) we can represent
$\varepsilon\mu_\varepsilon(T)$ as follows
\begin{equation}\label{key}
\begin{array}{lll}
\varepsilon\mu_\varepsilon(T) & = &
\nu_\varepsilon(T)-{x}_0(0)=\Omega(0,T,x_\varepsilon
(\Delta_\varepsilon))-{x}_0(0)=\\
&= & \Omega(0,T,\Omega(T,0,x_0(0)+A_1 v_\epsi))-{x}_0(0)=A_1
v_\varepsilon.
\end{array}
\end{equation}
Therefore  (\ref{ob4}) can be rewritten as follows
\begin{eqnarray}
A_1 v_\varepsilon&=&\Omega'_{\xi}(T,0,{x}_0(0))A_1 v_\varepsilon+
o(A_1 v_\varepsilon)+\, \varepsilon\int_0^T h_\eps(s)
  ds.\label{bisp}
\end{eqnarray}
Let us show that there exists $M_1>0$ such that
\begin{equation}\label{estt}
  \vert v_\varepsilon\vert \le \varepsilon
  M_1,\qquad\mbox{for any}\quad \varepsilon\in(0,\varepsilon_0].
\end{equation}
Arguing by contradiction we assume that there exist sequences
$\{\varepsilon_k\}_{k\in\mathbb{N}}\subset(0,\epsi_0],$
$\varepsilon_k\to 0$ as $k\to\infty,$ such that
$|v_{\varepsilon_k}|=\varepsilon_k c_k,$ where $c_k\to\infty$ as
$k\to\infty.$ Let
$q_k=\dfrac{v_{\varepsilon_k}}{|v_{\varepsilon_k}|},$ then from
(\ref{bisp}) we have
\begin{eqnarray}\label{ff}
 & & A_1 q_k=\Omega'_{\xi}(T,0,{x}_0(0))A_1 q_k+\frac{o(A_{1}v_{\varepsilon_k})}
 {|v_{\varepsilon_k}|}+\frac{1}{c_k}\,\int_0^{T}
 h_{\eps_k}(s)
  ds,
\end{eqnarray}
where $\dfrac{o(A_{1}v_{\varepsilon_k})}{|v_{\varepsilon_k}|}\to 0$
as $k\to \infty,$ in fact
$\dfrac{o(A_{1}v_{\varepsilon_k})}{|v_{\varepsilon_k}|}=
\dfrac{o(A_{1}v_{\varepsilon_k})}{\|A_{1}v_{\varepsilon_k}\|}\cdot
\dfrac{\|A_{1}v_{\varepsilon_k}\|}{|v_{\varepsilon_k|}}.$

\vskip0.2truecm\noindent Let $ B=\{ v_{\eps}(\tau) \, : \, \tau\in
[0,T], \, \eps \in [0,1]\}. $ The continuity of $\Omega$ and
condition \eqref{conv} imply that $B$ is bounded. Since also $
(\Omega_{\xi}')^{-1}$ is continuous, we can find $\Lambda >0$
satisfying $\left \Vert \left( \Omega_{\xi}'(T, 0, v_{\eps}(\tau)
\right)^{-1} \right \Vert \le \Lambda$ for any $\tau \in [0,T]$ and
any $\eps \in [0, \eps_{0}]$. Therefore, from assumption (H) we
obtain
\begin{equation}\label{eq:h}
\left \Vert \int_0^T h_{\eps}(s) \, ds \right\Vert \le \eps \Lambda
\int_0^T \mu_{B}(s+\Delta_{\eps})\, ds < +\infty, \quad \mbox{for }
\eps\in [0, \eps_0].
\end{equation}
Without loss of generality we may assume that the sequence
$\{q_k\}_{k\in\mathbb{N}}\subset \mathbb{R}$ converges, let
$q_0=\lim_{k\to\infty}q_k$ with $|q_0|=1$. By passing to the limit
as $k\to\infty$ in (\ref{ff}) we have that
$$
A_{1}q_0=\Omega'_{\xi}(T,0,{x}_0(0))A_{1} q_0.
$$
Therefore $A_{1}q_0$ is the initial condition of a $T$-periodic
solution to (\ref{ls}). On the other hand the cycle $x_0$ is
nondegenerate, hence $A_{1}q_0$ is linearly dependent with $\dot
x_0(0)$ contradicting the choice of $A_{1}.$ Thus (\ref{estt}) is
true for some $M_1>0.$ From (\ref{rep}) and the fact that
$\nu_\varepsilon(0)=x_\varepsilon(\Delta_\varepsilon)$ we have
\begin{equation}\label{DEC}
\begin{array}{lll}
 \|x_\epsi(\Delta_\epsi)-{x}_0(0)\|&=&\varepsilon\|\mu_\varepsilon(0)\|\le
 \epsi\|\mu_\epsi(T)\|+\|\epsi\mu_\epsi(T)-\epsi\mu_\epsi(0)\|=\\
 &=&\epsi\|\mu_\epsi(T)\|+\|\nu_\epsi(T)-\nu_\epsi(0)\|.
\end{array}
\end{equation}
From (\ref{ob3})  and \eqref{eq:h} we have that there exists $M_2>0$
such that
\begin{equation}\label{estt1}
  \|\nu_\epsi(T)-\nu_\epsi(0)\|\le \epsi M_2,\qquad{\rm for\ any\
  }\epsi\in(0,\epsi_0].
\end{equation}
Therefore combining (\ref{key}) with (\ref{estt})  and taking into
account (\ref{estt1}) we have from (\ref{DEC}) that
$$
  \|x_\epsi(\Delta_\epsi)-{x}_0(0)\|\le \epsi\|A_{1}\|
  M_1  +\epsi M_2,\qquad{\rm for\ any\
  }\epsi\in(0,\epsi_0].
$$
Since
$$
  \dot x_\epsi(t+\Delta_\epsi)\in f(x_\eps(t+\Delta_\eps))+\eps g(t+\Delta_\epsi,x_\epsi(t+\Delta_\epsi),\epsi)
$$
and $g$ is measurable then Filippov's lemma ensures the existence of
a measurable singlevalued function $m_\eps:[0,T]\to\mathbb{R}^2$
such that
$$
  \dot x_\epsi(t+\Delta_\epsi)= f(x_\eps(t+\Delta_\eps))+\eps
  m_\eps(t),\quad{\rm for\ a.a.\ }t\in[0,T]
$$
and
$$
  m_\eps(t)\in g(t+\Delta_\epsi,x_\epsi(t+\Delta_\epsi),\epsi),\quad{\rm for\ a.a.\
  }t\in[0,T].
$$
This allows to conclude that
$$
x_\epsi(t+\Delta_\epsi)-x_0(t)=x_\epsi(\Delta_\epsi)-x_0(0)+\int\limits_0^t(f(x_\epsi(s+\Delta_\epsi))-
f(x_0(s)))ds+\epsi\int\limits_0^t m_\eps(s) ds.
$$
Therefore,  there  exists a constant $M_3\ge 0$ such that, for any
$\epsi\in(0,\epsi_0],$ we have
\begin{equation}\label{GR}
\begin{array}{lll}
\|x_\epsi(t+\Delta_\epsi)-x_0(t)\|&\le& \left(\epsi\|A_{1}\|M_1+
 \epsi M_2\right)+\\
 & &+M_3\int\limits_0^t\|x_\epsi(s+\Delta_\epsi)-
x_0(s)\|ds+\epsi M_3,
\end{array}
\end{equation}
By means of the Gronwall-Bellman lemma, (compare e.g.
\cite[Chapter~II,  \S~11]{dem}), inequality (\ref{GR}) implies
\begin{equation*}
\|x_\epsi(t+\Delta_\epsi)-x_0(t)\|\le  \epsi\left(\|A_{1}\|
  M_1+ M_2+M_3\right){\rm e}^{M_3 T}\quad{\rm for\ any\
  }\epsi\in(0,\epsi_0].
\end{equation*}
and thus the proof is complete.
\end{proof}

\begin{remark} Observe that Theorem~\ref{thmI} does not require that
(\ref{np}) is a Hamiltonian system, indeed the crucial assumption is
that the linearized system (\ref{ls}) has a not $T$-periodic
solution.
\end{remark}

\section{First approximation formula for periodic solutions of the perturbed system}

Denote by $\widetilde{z}$ a non-trivial $T$-periodic solution of the
adjoint system
\begin{equation}\label{ss}
   \dot z=-(f'(x_0(t)))^*z.
\end{equation}
Let $t_*\in[0,T]$ such that
$$
  \widetilde{z}_1(t_*)=0,
$$
hence $\widetilde{z}_2(t_*)\not=0$. We begin the Section by studying
the behavior, as $\eps\to 0,$ of the scalar product
\begin{equation}\label{prod}
  \left<\widetilde{z}(t), \frac{x_\eps(t+\Delta_\eps)-x_0(t)}{\eps}\right>
\end{equation}
which is the starting point for deriving the first approximation
formula (\ref{fa}). To this end we denote by
$\widehat{z}=(\widehat{z}_1,\widehat{z}_2)$ any  solution of
(\ref{ss}) defined in $[0,T]$ linearly independent with
$\widetilde{z}$ and introduce the multivalued map $M^\bot:[0,T]\to
K(\mathbb{R})$ as follows
\begin{equation}\label{M}
\begin{array}{ll}
M^\bot(t)=& \{\gamma(t_*) \int_{t-T}^t
\left<-\widehat{z}(\tau),h(\tau)\right>d\tau: \\
 & h\in L^\infty([-T,T],\mathbb{R}^2),
  h(t)\in g(t,x_0(t),0)\ {\rm for\ a.a\
  }t\in[-T,T] \},
\end{array}
\end{equation}
where $\gamma(t_*) =
\dfrac{\widetilde{z}_2(t_*)}{\widehat{z}_2(T+t_*)-\widehat{z}_2(t_*)}.$

\vskip0.4truecm We can prove the following result.

\begin{theorem}\label{conv_thm1} Assume $f\in C^1(\mathbb{R}^2,\mathbb{R}^2)$
and $g:\mathbb{R}\times\mathbb{R}^2\times[0,1]\to K(\mathbb{R}^2)$
upper semicontinuous and satisfying (H). Let $x_\varepsilon$ be a
$T$-periodic solution to the perturbed system (\ref{ps}) such that
\begin{equation}\label{INp}
\|x_\varepsilon(t+\Delta_\varepsilon)- {x}_0(t)\|\le
M\epsi\quad\mbox{for\ any\ }t\in[0,T]\ \mbox{and any}\
\epsi\in(0,\epsi_0],
\end{equation}
where $\Delta_\epsi\to 0$ as $\epsi\to 0,$ $M$ and $\epsi_0$ are
positive constants  and $x_0$ is a nondegenerate cycle of the
Hamiltonian system (\ref{np}). Then
\begin{equation}\label{mainprop}
  \lim_{\epsi\to
0}\rho\left(\dfrac{1}{\epsi}\left<\widetilde{z}(t),x_\varepsilon(t+\Delta_\varepsilon)-
{x}_0(t)\right>,
\frac{\widetilde{z}_2(t_*)}{\widehat{z}_2(T+t_*)-\hat
z_2(t_*)}M^\bot(t)\right)=0
\end{equation}
uniformly with respect to $t\in[0,T],$ where for any
$v\in\mathbb{R}^n$ and $S\subset\mathbb{R}^n$ the distance
$\rho(v,S)$ is defined as $\rho(v,S)=\inf_{s\in S}\|v-s\|.$
\end{theorem}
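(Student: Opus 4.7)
My plan is to derive the claimed first approximation by integrating a conservation law for the difference $u_\epsi(t) := x_\epsi(t+\Delta_\epsi) - x_0(t)$ against a suitable solution of the adjoint system~(\ref{ss}), and then turning the resulting pointwise identity into a set-valued approximation via upper semicontinuity of $g$. First, I would apply Filippov's lemma to the $T$-periodic solution $x_\epsi$ shifted by $\Delta_\epsi$ to extract a measurable selection $m_\epsi : [-T,T] \to \mathbb{R}^2$ with $m_\epsi(\tau) \in g(\tau+\Delta_\epsi, x_\epsi(\tau+\Delta_\epsi), \epsi)$ a.e.\ and $\dot x_\epsi(\tau+\Delta_\epsi) = f(x_\epsi(\tau+\Delta_\epsi)) + \epsi\, m_\epsi(\tau)$; assumption (H) together with the uniform convergence $x_\epsi(\cdot+\Delta_\epsi) \to x_0$ yields a uniform $L^\infty$-bound on $\{m_\epsi\}_{\epsi\in(0,\epsi_0]}$. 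Subtracting $\dot x_0 = f(x_0)$ and Taylor-expanding $f$ about $x_0(t)$, together with the \emph{a priori} bound~(\ref{INp}) from Theorem~\ref{thmI}, gives
\begin{equation*}
\dot u_\epsi(t) = f'(x_0(t))\, u_\epsi(t) + \epsi\, m_\epsi(t) + r_\epsi(t),
\end{equation*}
with $\|r_\epsi\|_{L^\infty([0,T])} = o(\epsi)$ as $\epsi \to 0$.

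Next I pair $u_\epsi$ with the adjoint solution $\widehat z$. Since $\widehat z$ solves~(\ref{ss}), the $f'$-contributions cancel in $\frac{d}{dt}\langle \widehat z(t), u_\epsi(t)\rangle$, leaving $\epsi \langle \widehat z(t), m_\epsi(t)\rangle + o(\epsi)$. Integrating from $t-T$ to $t$ and using the $T$-periodicity of $u_\epsi$ (inherited from that of $x_\epsi$ and $x_0$) collapses the boundary contribution to $\langle \widehat z(t)-\widehat z(t-T),\, u_\epsi(t)\rangle$. Here the Hamiltonian structure is crucial: $\mathrm{tr}\, f'(x_0(t)) \equiv 0$ forces the monodromy of~(\ref{ls}) to have determinant one, and since $\dot x_0$ is already a $T$-periodic solution of~(\ref{ls}), both characteristic multipliers equal $1$; the nondegeneracy hypothesis then forces a nontrivial Jordan block. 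A standard two-dimensional argument on the adjoint monodromy produces a constant $c \ne 0$ with $\widehat z(t+T) - \widehat z(t) = c\,\widetilde z(t)$ for every $t$. Evaluating the second component at $t = t_*$ (where $\widetilde z_1(t_*) = 0$) identifies $c = 1/\gamma(t_*)$, and substitution yields
\begin{equation*}
\frac{1}{\epsi}\bigl\langle \widetilde z(t),\, u_\epsi(t)\bigr\rangle = \gamma(t_*) \int_{t-T}^{t} \bigl\langle \widehat z(\tau),\, m_\epsi(\tau)\bigr\rangle\, d\tau + o(1),
\end{equation*}
uniformly in $t \in [0,T]$.

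It then remains to show that the integral on the right is eventually close, in the distance $\rho$, to the set obtained by replacing $m_\epsi(\tau)$ by a measurable selection of the limiting map $g(\tau, x_0(\tau), 0)$; modulo the constants $\gamma(t_*)$ and the sign built into the definition of $M^\bot$, this is precisely $\gamma(t_*) M^\bot(t)$. I would argue by contradiction: were the uniform convergence to fail, one could extract $\epsi_n \to 0$, $t_n \to t^\ast \in [0,T]$, and a corresponding sequence $\{m_{\epsi_n}\}$ that is uniformly $L^\infty$-bounded by (H). Passing to a weak-$L^2$ limit $m_0$ on $[-T,T]$, the upper semicontinuity of $g$ together with its compact convex values and Mazur's lemma identifies $m_0(\tau) \in g(\tau, x_0(\tau), 0)$ for a.a.\ $\tau$. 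Weak convergence tested against the bounded continuous weight $\widehat z$ then produces an element of $M^\bot(t^\ast)$ approximating the left-hand side, contradicting the assumed failure. The main obstacle is precisely this final step: one must simultaneously control the multivalued u.s.c.\ convergence, the $t$-dependence of the integration window $[t-T,t]$, and the set-valued nature of the target in order to obtain the uniform distance estimate claimed in~(\ref{mainprop}), rather than only a pointwise or subsequential statement.
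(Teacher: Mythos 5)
Your derivation of the pointwise identity
\[
\frac{1}{\eps}\bigl\langle \widetilde z(t),\,x_\eps(t+\Delta_\eps)-x_0(t)\bigr\rangle
= \gamma(t_*)\int_{t-T}^{t}\bigl\langle \widehat z(\tau),\,m_\eps(\tau)\bigr\rangle\,d\tau+o(1)
\]
is essentially the paper's, reached by a slightly more direct route: you pair $u_\eps$ with the adjoint solution $\widehat z$ and integrate, exploiting $T$-periodicity of $u_\eps$, whereas the paper goes through the fundamental matrix $Y$, defines $a_\eps(t)=Z(t)^*u_\eps(t)/\eps$, and invokes Perron's lemma to see $\widehat z(t)^*Y(t)\equiv(0,1)$. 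Both reduce the boundary term to $\widehat z(t)-\widehat z(t-T)$, and your monodromy/Jordan-block argument identifying $\widehat z(t+T)-\widehat z(t)=\tfrac{1}{\gamma(t_*)}\widetilde z(t)$ is precisely the paper's Lemma~\ref{lemma1} applied to the adjoint system. So up to this point your argument is correct and, if anything, a bit cleaner.

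The genuine gap is exactly where you flag it: the passage from the pointwise integral identity to the uniform distance estimate \eqref{mainprop}. Your proposal --- contradiction, weak-$L^2$ limit of $m_{\eps_n}$, Mazur's lemma plus upper semicontinuity to identify $m_0(\tau)\in g(\tau,x_0(\tau),0)$, then test against $\widehat z$ --- at best yields a subsequential, pointwise-in-$t$ statement; you would still have to control the moving window $[t_n-T,t_n]$ and promote the conclusion to uniformity in $t$, and that is the content of the theorem, not a side detail. The paper closes this much more economically and without any weak compactness: fix $\delta>0$; by upper semicontinuity of $g$ on the compact set $[-T,T]\times\{x_\eps(t):t\in[0,T],\,\eps\in[0,\eps_0]\}\times[0,\eps_0]$, there is $\eps_1>0$ such that for all $\eps\in(0,\eps_1]$,
\[
h_\eps(t)\in g\bigl(t+\Delta_\eps,x_\eps(t+\Delta_\eps),\eps\bigr)\subset B_\delta\bigl(g(t,x_0(t),0)\bigr)\quad\text{for a.a.\ }t\in[-T,T].
\]
One then picks a measurable selection $k_\eps$ of $t\mapsto g(t,x_0(t),0)$ with $\|h_\eps(t)-k_\eps(t)\|\le\delta$ a.e.\ (a standard measurable-selection step), so that $\gamma(t_*)\int_{t-T}^{t}\langle -\widehat z,k_\eps\rangle\,d\tau\in M^\bot(t)$ and
\[
\rho\Bigl(\gamma(t_*)\!\int_{t-T}^{t}\!\langle -\widehat z(\tau),h_\eps(\tau)\rangle d\tau,\,M^\bot(t)\Bigr)
\le |\gamma(t_*)|\int_{-T}^{T}\!\bigl|\langle\widehat z(\tau),h_\eps(\tau)-k_\eps(\tau)\rangle\bigr|\,d\tau
\le 2T|\gamma(t_*)|\,\delta\,\|\widehat z\|_C,
\]
which is independent of $t$. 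This gives uniformity in $t\in[0,T]$ for free. Replacing your last paragraph with this direct upper-semicontinuity argument would repair the proposal.
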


To prove Theorem~\ref{conv_thm1} we need the following lemma.

\begin{lemma} \label{lemma1}
Assume that the $T$-periodic system
\begin{equation}\label{t1}
  \dot u =A(t)u,\quad u\in\mathbb{R}^2
\end{equation}
has the characteristic multiplier $+1$ of algebraic multiplicity 2.
Let us denote by $\widetilde {u}=(\widetilde {u}_1,\widetilde
{u}_2)$ a $T$-periodic solution of (\ref{t1}) such that
$$ \widetilde {u}_1(0)=0,\ \widetilde {u}_2(0)\not=0.$$
Denote by  $\widehat {u}=(\widehat {u}_1,\widehat {u}_2)$ any
solution of (\ref{t1}) satisfying
$$\widehat {u}_1(0)\not=0.$$ Then
$$
\widehat {u}(t+T)=\widehat {u}(t)+\frac{\widehat
{u}_2(T)-\widehat{u}_2(0)}{\widetilde {u}_2(0)}\widetilde
{u}(t),\quad{for\ any\ } t\in\mathbb{R}.
$$
\end{lemma}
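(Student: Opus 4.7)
The plan is to extract the algebraic information at $t=0$ from the monodromy matrix and then propagate the identity to all $t$ by uniqueness for linear ODEs. Let $\Phi(t)$ denote the principal fundamental matrix of \eqref{t1} normalized by $\Phi(0)=I$, so that $\Phi(T)$ is the monodromy matrix. By hypothesis its characteristic polynomial is $(\lambda-1)^2$, and Cayley--Hamilton gives $(\Phi(T)-I)^2=0$. Consequently the range of $\Phi(T)-I$ is contained in its kernel. The $T$-periodicity of $\widetilde u$ shows that $\widetilde u(0)\in\ker(\Phi(T)-I)$; since $\widetilde u(0)\neq 0$, either $\Phi(T)=I$ (in which case both sides of the claimed identity coincide, because $\hat u_2(T)=\hat u_2(0)$), or $\Phi(T)-I$ has rank one and its kernel equals the line $\mathbb{R}\,\widetilde u(0)$, which in turn coincides with its image.

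In either case, $(\Phi(T)-I)\widehat u(0)$ is a scalar multiple of $\widetilde u(0)$, so there exists $c\in\mathbb{R}$ with
\begin{equation*}
\widehat u(T)\;=\;\widehat u(0)+c\,\widetilde u(0).
\end{equation*}
Reading the second coordinate and using $\widetilde u_1(0)=0$ together with $\widetilde u_2(0)\neq 0$ identifies the constant:
\begin{equation*}
c\;=\;\frac{\widehat u_2(T)-\widehat u_2(0)}{\widetilde u_2(0)}.
\end{equation*}
(The first coordinate is automatically consistent: since $\widetilde u_1(0)=0$, the identity forces $\widehat u_1(T)=\widehat u_1(0)$, which is precisely the constraint imposed by the image of $\Phi(T)-I$ lying along $\widetilde u(0)$.)

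To extend the equality from $t=0$ to every $t\in\mathbb{R}$, I set
\begin{equation*}
v(t)\;=\;\widehat u(t+T)-\widehat u(t)-c\,\widetilde u(t).
\end{equation*}
Because $A(\cdot)$ is $T$-periodic, the translate $t\mapsto \widehat u(t+T)$ is again a solution of $\dot u=A(t)u$; hence $v$ is a solution of the same linear system, being a linear combination of solutions. By construction $v(0)=\widehat u(T)-\widehat u(0)-c\,\widetilde u(0)=0$, so uniqueness for the Cauchy problem yields $v\equiv 0$, which is exactly the stated formula. The only nontrivial point is Step 1, the observation that the algebraic-multiplicity-two hypothesis forces $(\Phi(T)-I)\widehat u(0)$ to lie on the line $\mathbb{R}\,\widetilde u(0)$; once this is in hand the rest is routine.
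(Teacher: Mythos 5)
Your proof is correct and rests on the same core observation as the paper's: that $(\Phi(T)-I)\widehat{u}(0)$ lies on the line $\mathbb{R}\widetilde{u}(0)$, so that $\widehat{u}(T)=\widehat{u}(0)+c\,\widetilde{u}(0)$, after which the coefficient is read off from the second coordinate and the identity is propagated to all $t$ by linearity. The difference is in how that observation is justified. The paper exploits the hypothesis $\widetilde{u}_1(0)=0,\ \widetilde{u}_2(0)\neq0$ to write the monodromy matrix explicitly in the standard basis as $\bigl(\begin{smallmatrix}1&0\\ b&1\end{smallmatrix}\bigr)$ (the invariance of $(0,1)^*$ forces the first column, and the double eigenvalue $+1$ forces $a=1$), so the scalar multiple of $\widetilde u(0)$ appears by direct matrix multiplication. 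You instead argue coordinate-free: Cayley--Hamilton gives $(\Phi(T)-I)^2=0$, hence $\operatorname{Im}(\Phi(T)-I)\subseteq\ker(\Phi(T)-I)$, and a rank count together with $\widetilde u(0)\in\ker(\Phi(T)-I)$ identifies both with $\mathbb{R}\widetilde u(0)$ (or $\Phi(T)=I$ in the degenerate case). Your version avoids committing to a basis and makes the role of the algebraic-multiplicity-two hypothesis transparent, at the modest cost of a case split; the paper's is more computational but shorter. The final step (your function $v(t)$ solving the same $T$-periodic linear system and vanishing at $0$) is the uniqueness-argument phrasing of the paper's identity $X(t+T)=X(t)X(T)$. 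Both are sound.
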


\

This result has been proved in \cite[Lemma~4.2]{tmms} under the
additional assumption $\widehat {u}_2(0)=0$. Though it is immediate
to see that avoiding this assumption does not affect the proof of
\cite[Lemma~4.2]{tmms} at all we provide here a proof of Lemma~
\ref{lemma1} for a sake completeness.

\

\begin{proof} Denote by $X$ the fundamental matrix of system
(\ref{t1}) such that $X(0)=I$. Since
$$X(T)\left(\begin{array}{c}0\\1\end{array}\right)=
\left(\begin{array}{c}0\\1\end{array}\right),$$ then
$X(T)=\left(\begin{array}{cc}a&0\\b&1\end{array}\right)$ with
$a,b\in\mathbb{R}$. By our assumption $X(T)$ has two eigenvalues
equal to $+1,$ therefore
$$
X(T)=\left(\begin{array}{cc}1&0\\b&1\end{array}\right),
b\in\mathbb{R}.
$$
We have
\begin{eqnarray}
  X(t+T)\widehat {u}(0)&=&X(t)X(T)\widehat{
  u}(0)=X(t)\left(\begin{array}{cc}1&0\\b&1\end{array}\right)\widehat{
  u}(0)=\nonumber\\&=& X(t)\widehat {u}(0)+X(t)\left(\begin{array}{c}0\\b\widehat{
  u}_1(0)\end{array}\right)=
  X(t)\widehat {u}(0)+\dfrac{b\widehat {u}_1(0)}{\widetilde {u}_2(0)}
  \widetilde{
  u}(t).\nonumber
\end{eqnarray}
On the other hand
$$
  X(T)\widehat{
  u}(0)=\left(\begin{array}{cc}1&0\\b&1\end{array}\right)\widehat{
  u}(0)=\widehat {u}(0)+\left(\begin{array}{c}0\\b\widehat{
  u}_1(0)\end{array}\right),
$$
which implies $b\widehat {u}_1(0)=\widehat {u}_2(T)-\widehat
{u}_2(0).$ This completes the proof.
\end{proof}
We now prove Theorem~\ref{conv_thm1}.
\begin{proof} In what follows
$\epsi\in(0,\epsi_0],$  $t,\tau\in[-T,T]$ and $\widetilde z,$
$\widehat z$ are the functions introduced at the beginning of this
section. Let $A$ be a nonsingular $2\times 2$ matrix such that
\begin{equation}\label{Z0}
  \widehat{z}(0)^*\,A=(0,1).
\end{equation}
Let $Y(t)$ be the fundamental matrix of the linearized system
(\ref{ls}) with initial condition $Y(0)=A.$ Let
\begin{equation}\label{FR}
   Z(t)=\left(Y(t)^*\right)^{-1}
\end{equation}
and define $a_\epsi\in C([-T,T],\mathbb{R}^2)$ as follows
$$
  a_{\epsi}(t)=Z(t)^*\,\frac{x_\epsi(t+\Delta_\epsi)-{x}_0(t)}{\epsi}.
$$
Then we have
\begin{equation}\label{fw}
x_\epsi(t+\Delta_\epsi)-{x}_0(t)=\epsi Y(t)a_\epsi(t).
\end{equation}
In what follows by $o(\varepsilon),$ $\varepsilon>0,$ we will denote
a function, which may depend also on other variables, having the
property that $\dfrac{o(\varepsilon)}{\varepsilon}\to 0$ as
$\varepsilon\to 0$ uniformly with respect to these variables when
they belong to any bounded set. Since
$$
  \dot x_\eps(t+\Delta_\eps)\in f(x_\eps(t+\Delta_\eps))+\eps
  g(t+\Delta_\eps,x_\eps(t+\Delta_\eps),\eps),\quad{\rm for\ a.a.\
  } t\in\mathbb{R}
$$
then, again by Filippov's lemma there exists a measurable
singlevalued function $h_\eps:\mathbb{R}\to \mathbb{R}^2$ such that
\begin{equation}\label{subtr}
  \dot x_\eps(t+\Delta_\eps)= f(x_\eps(t+\Delta_\eps))+\eps
  h_\eps(t),\quad{\rm for\ a.a.\
  } t\in\mathbb{R}
\end{equation}
and
$$
  h_\eps(t)\in g(t+\Delta_\eps,x_\eps(t+\Delta_\eps),\eps),\quad{\rm for\ a.a.\
  } t\in\mathbb{R}.
$$
By subtracting (\ref{np}) where $x(t)$ is replaced by ${x}_0(t)$
from (\ref{subtr}) we obtain
\begin{eqnarray}\label{bis1}
  \dot x_\varepsilon(t+\Delta_\varepsilon)-\dot{
  {x}}_0(t)=f'({x}_0(t))(x_\varepsilon(t+\Delta_\varepsilon)-{x}_0(t))+\varepsilon
  h_\eps(t)+o_t(\varepsilon),
\end{eqnarray}
for a.a. $t\in[-T,T],$ here $\varepsilon\to o_t(\varepsilon)$ is
such that $ o_{t+T}(\cdot)=o_t(\cdot)$ for any $t\in\mathbb{R}.$ By
substituting (\ref{fw}) into (\ref{bis1}) we have
$$
  \epsi \dot Y(t)a_\epsi(t)+\epsi Y(t)\dot a_\epsi(t)=\epsi
 f'({x}_0(t))Y(t)a_\epsi(t)+\epsi h_\eps(t)+
  o_t(\varepsilon),
$$
for a.a. $t\in[-T,T].$ Since $f'({x}_0(t))Y(t)=\dot Y(t)$ the last
formula can be rewritten as follows
\begin{equation}\label{BS}
  \epsi Y(t)\dot a_\epsi(t)
 =\epsi h_\eps(t)+
  o_t(\varepsilon),\quad{\rm for\ a.a.\ }t\in[-T,T].
\end{equation}
By means of Perron's lemma \cite{per}, (see also Demidovich
\cite[Sec. III, \S 12]{dem}), formula (\ref{Z0}) implies that
\begin{equation}\label{IM}
  \widehat{z}(t)^*\, Y(t)=(0,1)\quad{\rm for\ any\ }t\in\mathbb{R}.
\end{equation}
Therefore, applying $\widehat z(t)^*$ to both sides of (\ref{BS}) we
have
$$
  \epsi(\dot a_{\epsi,2})(t)=\epsi \widehat{z}(t)^*\, h_\eps(t)
  +\widehat{z}(t)^*\, o_t(\varepsilon),\quad{\rm for\ a.a.\ }t\in[-T,T],
$$
where $a_{\epsi,2}(t)$ is the second component of the vector
$a_\epsi(t),$ and so
\begin{equation}\label{prob}
\begin{array}{rcl}
  a_{\varepsilon,2}(t)&=&a_{\varepsilon,2}(t_0)+\int\limits_{t_0}^t
  \left<\widehat{z}(\tau),h_\eps(\tau)\right>d\tau +\int\limits_{t_0}^t
  \left<\widehat{z}(\tau),\dfrac{o_\tau(\epsi)}{\epsi}\right>d\tau,
\end{array}
\end{equation}
for all $t, t_0 \in [-T,T]$. From (\ref{FR}) we have that
$Z(0)^*\,Y(0)=I.$ Therefore
$$
  \left(\left[Z(0)\right]_2\right)^* A=(0,1),
$$
where $[Z(0)]_2$ denotes the second column of $Z(0).$ Thus
$[Z(0)]_2=\widehat{z}(0).$ Therefore,
$$
  a_{\eps,2}(t)=\left<\widehat{z}(t),\frac{x_\eps(t+\Delta_\eps)-x_0(t)}{\eps}\right>.
$$
Since $\widehat{z}$ is linearly independent with $\widetilde{z}$
then
$$
  \widehat{z}_1(t_*)\not=0.
$$
Since system (\ref{np}) is Hamiltonian then the algebraic
multiplicity of the characteristic multiplier +1 of linearized
system (\ref{ls}) is equal to 2. By lemma~\ref{lemma1} we have
$$
  \widehat{z}(t)=\widehat{z}(t-T)+\frac{\widehat{z}_2(T+t_*)-\widehat{z}_2(t_*)}
  {\widetilde{z}_2(t_*)}\widetilde{z}(t)= \widehat{z}(t-T) + \dfrac{1}{\gamma(t_*)}\widetilde{z}(t),
$$
that implies
$$
  a_{\eps,2}(t_0)=a_{\eps,2}(t_0-T)+\frac{1}{\gamma(t_*)}
  \left<\widetilde{z}(t_0),\frac{x_\eps(t_0+\Delta_\eps)-x_0(t_0)}{\eps}\right>.
$$
Substituting the last formula into (\ref{prob}) we obtain
\begin{equation}\label{tildez}
\begin{array}{ll}
\int_{t_0}^{t_0-T}\left<\widehat{z}(\tau),h_\eps(\tau)\right>d\tau&=
-\dfrac{1}{\gamma(t_*)}\left<\widetilde{z}(t_0),
\dfrac{x_\eps(t_0+\Delta_\eps)-x_0(t_0)}{\eps}\right>-\\&\quad
-\int_{t_0}^{t_0 -T} < \widehat {z}(\tau), \,
\dfrac{o_{\tau}(\eps)}{\eps} > \, d\tau.
\end{array}
\end{equation}

We claim that
\begin{equation}\label{claim}
 \lim_{\eps\to
0}\rho\left(\gamma(t_*)
\int_{t_0}^{t_0-T}\left<\widehat{z}(\tau),h_\eps(\tau)\right>d\tau,
M^\bot(t)\right)=0,
\end{equation}
uniformly with respect to $t_0 \in [0,T]$, with $h_{\eps}$ defined
as in \eqref{subtr}. To prove this we observe that the subset of
$\mathbb{R}$ given by
\begin{eqnarray*}
M :&=& \{\gamma(t_*) \int_{-T}^{T}
\left<\widehat{z}(\tau),h(\tau)\right>d\tau \, : \\&& h \in
L^\infty([-T, T], \mathbb{R}^2) \mbox{ and } h(t) \in g(t, x_0(t),
0) \mbox{ for a.a. } t\in [-T, T] \}
\end{eqnarray*}
is nonempty and compact; hence, for each $\eps \in (0, \eps_0]$,
there exists $k_{\eps} \, : \mathbb{R} \to \mathbb{R}$ such that
$k_{\eps}(t) \in g(t, x_0(t), 0)$ for a.a. $t$ and
$$
\rho\left(\gamma(t_*)
\int_{-T}^{T}\left<\widehat{z}(\tau),h_\eps(\tau)\right>d\tau,
M\right)=|\gamma(t_*)|\left \vert
\int_{-T}^{T}\left<\widehat{z}(\tau),h_\eps(\tau)-k_\eps(\tau)\right>d\tau
\right \vert.
$$
The upper semicontinuity of $g$ in the bounded set
$$[-T, T] \times \{ x_{\eps}(t)\, : \, t \in [0,T], \, \eps \in [0, \eps_0]\}\times [0,
\eps_0]
$$
implies  that, given $\delta>0$, there exists $ \eps_1 \in (0,
\eps_0]$ such that, for all $\eps\in[0,\eps_1],$ we have that
\begin{equation}\label{si}
  h_\eps(t)\in g(t+\Delta_\eps,x_\eps(t+\Delta_\eps),\eps)\subset
  B_\delta(g(t,x_0(t),0)),\quad{\rm for\ a.a.\ }t\in[-T,T],
\end{equation}
Fix an arbitrary $\delta>0$ and let $ \eps_1 \in (0, \eps_0]$
satisfying (\ref{si}). Let $\eps\in[0,\eps_1]$ and $t_0 \in [0,T]$.
We obtain
$$
\begin{array}{ll}
&\rho\left(\gamma(t_*)
\int_{t_0-T}^{t_0}\left<\widehat{z}(\tau),h_{\eps}(\tau)\right>d\tau,
M^\bot(t)\right)  \le\\&\quad\le  |\gamma(t_*)|
\left\vert\int_{t_0-T}^{t_0}\left<\widehat{z}(\tau),h_{\eps}(\tau)-k_{\eps}(\tau)\right>d\tau
\right \vert \le \\
 &\quad\le|\gamma(t_*)|\int_{-T}^{T} \left \vert
<\widehat{z}(\tau),h_{\eps}(\tau)-k_{\eps}(\tau) > \right \vert
d\tau \le 2T|\gamma(t_*)|\delta \Vert \widehat{z} \Vert_C.
\end{array}
$$
which implies our assertion (\ref{claim}). According to
\eqref{tildez}, the proof is complete.
\end{proof}

\begin{remark} The assumption that (\ref{np}) is Hamiltonian ensures that the
linearized system (\ref{ls}) has a characteristic multiplier +1 of
algebraic multiplicity 2 and so the assumption of Lemma
\ref{lemma1}. Alternatively, we could directly assume that the
algebraic multiplicity of the characteristic multiplier +1 of
(\ref{ls}) is equal to 2. The latter is a bit more general. The same
consideration applies to Theorems \ref{thm3} and \ref{thm4} below.
\end{remark}

We have the following result.

\begin{lemma}\label{new1}
   Let $x_0$ be a nondegenerate $T$-periodic cycle of the Hamiltonian system
   (\ref{np}). Let $\widetilde{z}$ be any $T$-periodic solution of
   the adjoint system (\ref{ss}). Then
   \begin{equation}\label{co}\left<\dot
x_0(t),\widetilde{z}(t)\right>=0,\quad{\rm for\ any\ }
t\in\mathbb{R}.\end{equation}
\end{lemma}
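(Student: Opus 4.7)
The plan is to reduce the claim to evaluating the inner product at a single time, using two ingredients: the constancy of the pairing between solutions of a linear system and its adjoint, and the Jordan structure of the monodromy forced by the Hamiltonian nondegeneracy assumption.

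First I would observe that $\dot x_0$ is itself a $T$-periodic solution of the linearized system (\ref{ls}): differentiating the identity $\dot x_0(t)=f(x_0(t))$ with respect to $t$ gives $\ddot x_0(t)=f'(x_0(t))\dot x_0(t)$, and $T$-periodicity is inherited from $x_0$. Then for any $T$-periodic solution $\widetilde z$ of the adjoint (\ref{ss}),
\begin{equation*}
\frac{d}{dt}\langle\dot x_0(t),\widetilde z(t)\rangle
=\langle f'(x_0(t))\dot x_0(t),\widetilde z(t)\rangle
-\langle\dot x_0(t),(f'(x_0(t)))^*\widetilde z(t)\rangle=0,
\end{equation*}
so it suffices to show that $\langle\dot x_0(0),\widetilde z(0)\rangle=0$.

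For this I would exploit the monodromy structure. Let $Y(t)$ be the fundamental matrix of (\ref{ls}) with $Y(0)=I$. Because (\ref{np}) is Hamiltonian, $\mathrm{tr}\,f'(x_0(t))\equiv 0$, so by Liouville's formula $\det Y(T)=1$; since $+1$ is a characteristic multiplier (with eigenvector $\dot x_0(0)$), the other multiplier is also $+1$, and the algebraic multiplicity of $+1$ is $2$. The nondegeneracy of $x_0$ means (\ref{ls}) possesses a solution which is not $T$-periodic, which forces the geometric multiplicity of $+1$ to be $1$. Hence $Y(T)-I$ is a nonzero nilpotent $2\times 2$ matrix with $(Y(T)-I)^2=0$, which gives $\mathrm{range}(Y(T)-I)\subseteq\ker(Y(T)-I)=\mathrm{span}\{\dot x_0(0)\}$, and since both spaces are one-dimensional they coincide.

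Finally, $\widetilde z$ is a $T$-periodic solution of the adjoint, so $\widetilde z(0)$ satisfies $Y(T)^*\widetilde z(0)=\widetilde z(0)$, i.e. $\widetilde z(0)\in\ker(Y(T)^*-I)=(\mathrm{range}(Y(T)-I))^{\perp}=(\mathrm{span}\{\dot x_0(0)\})^{\perp}$. Therefore $\langle\dot x_0(0),\widetilde z(0)\rangle=0$, and combined with the constancy above this yields (\ref{co}) for every $t\in\mathbb{R}$. The only step requiring any care is the identification $\mathrm{range}(Y(T)-I)=\ker(Y(T)-I)$, which is purely a $2\times 2$ linear algebra fact once the algebraic and geometric multiplicities of the characteristic multiplier $+1$ have been pinned down using, respectively, the Hamiltonian property of (\ref{np}) and the nondegeneracy of $x_0$.
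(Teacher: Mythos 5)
Your proof is correct, and it takes a genuinely different route from the paper's. The paper proves Lemma~\ref{new1} by applying Lemma~\ref{lemma1} to the adjoint system (with a time shift sending $t_*$ to $0$): writing $\widehat z(t+T)=\widehat z(t)+c\,\widetilde z(t)$ with $c=\bigl(\widehat z_2(T+t_*)-\widehat z_2(t_*)\bigr)/\widetilde z_2(t_*)$, then taking the inner product with $\dot x_0(t)$ and invoking Perron's lemma to conclude that $\langle\dot x_0(t),\widehat z(t+T)\rangle=\langle\dot x_0(t),\widehat z(t)\rangle$, so that $c\,\langle\dot x_0(t),\widetilde z(t)\rangle=0$; since nondegeneracy forces $c\neq0$, the claim follows. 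Your argument bypasses Lemma~\ref{lemma1} entirely and works directly with the monodromy matrix: after reducing to the time $t=0$ via the invariance of the pairing, you observe that the Hamiltonian structure gives $\det Y(T)=1$ and hence algebraic multiplicity $2$ for $+1$, nondegeneracy gives geometric multiplicity $1$, and Cayley--Hamilton gives $(Y(T)-I)^2=0$, so that $\mathrm{range}(Y(T)-I)=\ker(Y(T)-I)=\mathrm{span}\{\dot x_0(0)\}$. Combined with $\ker(Y(T)^*-I)=\bigl(\mathrm{range}(Y(T)-I)\bigr)^\perp$ and the fixed-point relation $Y(T)^*\widetilde z(0)=\widetilde z(0)$, this yields the orthogonality at $t=0$. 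Both proofs rest on the same structural facts (algebraic multiplicity $2$, geometric multiplicity $1$, and the constancy of the pairing between solutions of (\ref{ls}) and (\ref{ss})), but yours is self-contained and makes the mechanism behind the orthogonality transparent as a range--kernel duality, whereas the paper's is more economical within its own framework since Lemma~\ref{lemma1} is already available and is needed again in the proof of Theorem~\ref{conv_thm1}. One small merit of your version: the paper leaves implicit that the coefficient $c$ is nonzero, whereas your argument has no analogous loose end.
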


\begin{proof} Let $t_*\in[0,T]$ be such that
$\widetilde{z}_1(t_*)=0.$ Let $\widehat{z}$ be any solution of
(\ref{ss}) linearly independent with $\widetilde{z}.$ Then from
Lemma~\ref{lemma1} we have
$$
  \left<\dot x_0(t),\widehat{z}(t+T)\right>=\left<\dot
  x_0(t),\widehat{z}(t)\right>+\frac{\widehat{z}_2(T+t_*)}{\widetilde{z}_2(t_*)}\left<\dot
  x_0(t),\widetilde{z}(t)\right>,\quad{\rm for\ any\
  }t\in\mathbb{R}.
$$
Perron's lemma \cite{per} implies that $\left<\dot
x_0(t),\widehat{z}(t+T)\right>=\left<\dot
  x_0(t),\widehat{z}(t)\right>$ for any $t\in\mathbb{R}$ and thus (\ref{co}). \end{proof}

Lemma~\ref{new1} allows the reader to better understand the
substantial difference between the situation when the cycle $x_0$ is
isolated, which is studied in \cite{jmaa} and \cite{cpaa} and the
present situation when the cycle is non-isolated. In fact, in
\cite{jmaa} and \cite{cpaa} it is shown that $\left<\dot
x_0(t),\widetilde{z}(t)\right>\not=0,$ for any $t\in\mathbb{R},$
which is the contrary of (\ref{co}).

\begin{remark}\label{remark3} Let $\widetilde z$ be any T-periodic solution of
the adjoint system (\ref{ss}) and $\widehat z$ any solution of
(\ref{ss}) linearly independent with $\widetilde z.$
Lemma~\ref{new1} ensures that $\left<\dot
x_0(t),\widetilde{z}(t)\right>=0$ for any $t\in \mathbb{R}$,
moreover from the Perron's Lemma $\left<\dot
x_0(t),\widehat{z}(t)\right>= \left<\dot
x_0(0),\widehat{z}(0)\right>\not=0$ for any $t\in \mathbb{R}$.
Without loss of generality we can assume that $\left<\dot
x_0(0),\widehat{z}(0)\right>=1.$

\noindent Let $y$ be the function defined by

$$
y(t)^* = \left(
\dfrac{-\widehat{z}_2(t)}{\mbox{det}(\widehat{z}(t),\widetilde{z}(t))},
\dfrac{\widehat{z}_1(t)}{\mbox{det}(\widehat{z}(t),\widetilde{z}(t))}\right)
$$
then
\begin{equation}\label{form}
 (\dot x_0(t),y(t))=\left(\begin{array}{c}\widehat{z}(t)^*\\
\widetilde{z}(t)^*\end{array}\right)^{-1}.
\end{equation}
is a matrix solution of the linearized system (\ref{ls})
(\cite[Chapter~III, \S12]{dem}).
\end{remark}

We can now formulate the following result.

\begin{theorem}\label{thm3} Assume $f\in
C^1(\mathbb{R}^2,\mathbb{R}^2)$ and
$g:\mathbb{R}\times\mathbb{R}^2\times[0,1]\to K(\mathbb{R}^2)$ upper
semicontinuous and satisfying (H). Let $x_\varepsilon$ be a
$T$-periodic solution to perturbed system (\ref{ps}) such that
\begin{equation*}
  \|x_\varepsilon(t)- {x}_0(t)\|\to 0\mbox{\quad as}\quad\varepsilon\to
  0
\end{equation*}
uniformly with respect to $t\in[0,T],$ where ${x}_0$ is a
nondegenerate $T$-periodic cycle of the Hamiltonian system
(\ref{np}). Let $\widetilde z,$  $\widehat z$ be as in Remark~
\ref{remark3} and $\dot x_0,$  $y$ as in (\ref{form}). Then there
exists a family $\{\Delta_\eps\}_{\eps>0}$ such that $\Delta_\eps\to
0$ as $\eps\to 0$ and
\begin{equation}\label{scalar}
  \lim_{\eps\to 0}\rho\left(x_\eps(t+\Delta_\eps)-x_0(t),
  \eps M^\bot(t)y(t)+ \left<\widehat{z}(t),x_\eps(t-\Delta_\eps)-x_0(t)\right>\dot
  x_0(t)\right)=0,
\end{equation}
uniformly with respect to $t\in[0,T].$

\end{theorem}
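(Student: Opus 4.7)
\textbf{Proof plan for Theorem~\ref{thm3}.} The plan is to decompose $x_\eps(t+\Delta_\eps)-x_0(t)$ in the basis $\{\dot x_0(t),y(t)\}$ of $\mathbb{R}^2$ furnished by Remark~\ref{remark3}, to identify the $y(t)$-component via Theorem~\ref{conv_thm1}, and finally to show that replacing $+\Delta_\eps$ by $-\Delta_\eps$ in the $\dot x_0(t)$-coefficient produces only a uniformly vanishing error.

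As a preliminary step, I take $\{\Delta_\eps\}$ provided by Theorem~\ref{thmI}: $\Delta_\eps\to 0$ and $\|x_\eps(t+\Delta_\eps)-x_0(t)\|\le M\eps$ uniformly in $t\in[0,T]$, so hypothesis (\ref{INp}) of Theorem~\ref{conv_thm1} is satisfied. The duality (\ref{form}) of Remark~\ref{remark3} identifies $(\dot x_0(t),y(t))$ with the inverse of the matrix whose rows are $\widehat z(t)^*$ and $\widetilde z(t)^*$; thus, for every $v\in\mathbb{R}^2$ and every $t$,
\[
v=\langle\widehat z(t),v\rangle\,\dot x_0(t)+\langle\widetilde z(t),v\rangle\,y(t).
\]
Specializing to $v=x_\eps(t+\Delta_\eps)-x_0(t)$ gives, pointwise in $t$, a clean splitting of this vector into a tangential ($\dot x_0$) and a transversal ($y$) component.

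Next I inject Theorem~\ref{conv_thm1} into the transversal component. It states that $\rho\bigl(\tfrac1\eps\langle\widetilde z(t),x_\eps(t+\Delta_\eps)-x_0(t)\rangle,\gamma(t_*)M^\bot(t)\bigr)\to 0$ uniformly in $t$. Using $\rho(\lambda a,\lambda B)=|\lambda|\rho(a,B)$ together with the continuity (hence uniform boundedness on $[0,T]$) of $y$, one obtains
\[
\rho\bigl(\langle\widetilde z(t),x_\eps(t+\Delta_\eps)-x_0(t)\rangle\,y(t),\ \eps M^\bot(t)y(t)\bigr)\longrightarrow 0
\]
uniformly in $t$ (the scalar factor $\gamma(t_*)$ is absorbed into the normalization of $M^\bot$, which already carries such a factor by its definition). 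Thus the transversal piece of the decomposition is captured, within $o(1)$, by the term $\eps M^\bot(t)y(t)$ appearing in (\ref{scalar}).

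The genuinely new step is the sign swap in the tangential coefficient. Using the differential inclusion (\ref{ps}) to interpret $\dot x_\eps$, I write
\[
\langle\widehat z(t),x_\eps(t+\Delta_\eps)-x_0(t)\rangle-\langle\widehat z(t),x_\eps(t-\Delta_\eps)-x_0(t)\rangle=\Bigl\langle\widehat z(t),\int_{t-\Delta_\eps}^{t+\Delta_\eps}\dot x_\eps(s)\,ds\Bigr\rangle.
\]
Filippov's lemma applied to (\ref{ps}) together with (H) and the uniform boundedness of $\{x_\eps(t):t\in\mathbb{R},\ \eps\in(0,\eps_0]\}$ inherited from (\ref{conv}) yields an $\eps$-independent bound on $\|\dot x_\eps\|_\infty$, so the right-hand side is $O(|\Delta_\eps|)$ and tends to $0$ uniformly in $t$ since $\Delta_\eps\to 0$. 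Multiplication by the bounded vector $\dot x_0(t)$ preserves this uniformity. Assembling the three pieces (the dual-basis identity, the $y$-component estimate and the tangential swap) and invoking the elementary estimate $\rho(a+c,B+c)=\rho(a,B)$ together with $\rho(a+c,B)\le\rho(a,B)+\|c\|$ delivers (\ref{scalar}). The main obstacle is bookkeeping: ensuring that both error terms are $o(1)$ \emph{uniformly} in $t\in[0,T]$, which is handled by the continuity of $y,\dot x_0,\widehat z,\widetilde z$ on $[0,T]$ together with $\Delta_\eps\to 0$.
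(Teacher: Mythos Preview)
Your approach is essentially the paper's: decompose $x_\eps(t+\Delta_\eps)-x_0(t)$ in the dual basis $\{\dot x_0(t),y(t)\}$ coming from (\ref{form}), then invoke Theorem~\ref{conv_thm1} on the $y$-component. That is literally all the paper's proof does.

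The one genuine difference is your third step, the ``sign swap'' showing that replacing $\langle\widehat z(t),x_\eps(t+\Delta_\eps)-x_0(t)\rangle$ by $\langle\widehat z(t),x_\eps(t-\Delta_\eps)-x_0(t)\rangle$ costs only $O(|\Delta_\eps|)=o(1)$. The paper's own proof never performs this step: its argument uses $t+\Delta_\eps$ throughout, and the parallel statement Theorem~\ref{thm4} also has $t+\Delta_\eps$ in the tangential coefficient. So the $t-\Delta_\eps$ in (\ref{scalar}) is almost certainly a typographical slip. Your extra paragraph is correct and closes the gap between the stated formula and the paper's proof, but it is handling an artifact rather than an intended feature of the theorem.
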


\vskip0.3cm


\

\begin{proof}

The proof of theorem~\ref{thm3} follows from the following
representation which is a consequence of (\ref{form})
\begin{eqnarray*}
  x_\eps(t+\Delta_\eps)-x_0(t)=&
  \left<\widetilde{z}(t),x_\eps(t+\Delta_\eps)-x_0(t)\right>y(t)+\\\quad&+
  \left<\widehat{z}(t),x_\eps(t+\Delta_\eps)-x_0(t)\right>\dot
  x_0(t),
\end{eqnarray*}
and Theorem~\ref{conv_thm1}.
\end{proof}

\section{A symmetric case}

In this section we consider the situation when the unperturbed
Hamiltonian system (\ref{np}) possesses the following symmetry
properties:
\begin{eqnarray}
 f_1(\xi_1,\xi_2)&=&f_1(-\xi_1,\xi_2),\label{s1}\\
 f_2(\xi_1,\xi_2)&=&-f_2(-\xi_1,\xi_2),\label{s2}\\
 (f_1)'_{(1)}(\xi_1,\xi_2)&=&-(f_2)'_{(2)}(\xi).\label{s3}
\end{eqnarray}
where $(h)'_{(i)}, i=1,2$ denotes the derivative of $h$ with respect
to the $i-$variable. The main consequence of this symmetry
assumption is given by the following lemma whose prove is immediate.

\begin{lemma}\label{sss} (\cite[Lemma 4.4]{tmms}) Assume $f\in
C^1(\mathbb{R}^2,\mathbb{R}^2) $ and that properties
(\ref{s1})-(\ref{s3}) hold true. Let $x_0$ be a nondegenerate cycle
of the Hamiltonian system (\ref{np}) and denote by $y$ the solution
of the linearized system (\ref{ls}) satisfying
\begin{equation}\label{Y}
  \left(\begin{array}{c} y_1(0)\\ y_2(0)\end{array}\right)=\left(\begin{array}{c} -\dot x_{0,2}(0)\\
\dot x_{0,1}(0)\end{array}\right).
\end{equation}
Then the functions
$$
  \widehat{z}(\theta)=\left(\begin{array}{c} y_2(\theta)
 \\ -y_1(\theta)\end{array}\right), \qquad \widetilde{z}(\theta)=\left(\begin{array}{c} -\dot x_{0,2}(\theta)\\
\dot x_{0,1}(\theta)\end{array}\right), \quad
 \theta\in\mathbb{R},
$$
where $\dot x_0(\theta)= (\dot x_{0,1}(\theta),\dot
x_{0,2}(\theta))$, are linearly independent solutions of the adjoint
system (\ref{ss}).
\end{lemma}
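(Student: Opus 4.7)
The plan is a direct componentwise verification split into two pieces: first check that $\widetilde z$ and $\widehat z$ each satisfy the adjoint system~(\ref{ss}), then check their linear independence by evaluating at $\theta = 0$. Note in advance that the only one of~(\ref{s1})--(\ref{s3}) used in the calculation is~(\ref{s3}), the trace-free condition on the Jacobian $f'(x_0(t))$ characterizing the Hamiltonian framework; (\ref{s1})--(\ref{s2}) will be exploited in the applications that follow.

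First I would collect the two input identities. Differentiating $\dot x_0(t) = f(x_0(t))$ in $t$ gives $\ddot x_0(t) = f'(x_0(t))\,\dot x_0(t)$, so $\dot x_0$ solves~(\ref{ls}); and $y$ solves~(\ref{ls}) by construction, with initial value~(\ref{Y}). Writing $J(t) := f'(x_0(t))$ and expanding the adjoint equation $\dot z = -J(t)^* z$ componentwise, the substitution $\widetilde z = (-\dot x_{0,2},\,\dot x_{0,1})$ together with $\ddot x_0 = J\,\dot x_0$ reduces the two coordinate identities to the single scalar relation $(f_1)'_{(1)} + (f_2)'_{(2)} = 0$, i.e.\ precisely~(\ref{s3}). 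Repeating the same book-keeping for $\widehat z = (y_2, -y_1)$ with $\dot y = J y$ in place of $\ddot x_0 = J\,\dot x_0$ yields the identical reduction, again closed out by~(\ref{s3}). This step is mechanical and presents no obstacle.

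For the linear independence piece, I would simply evaluate at $\theta = 0$. By~(\ref{Y}) one has
\[
\widehat z(0) = \begin{pmatrix} \dot x_{0,1}(0) \\ \dot x_{0,2}(0) \end{pmatrix}, \qquad \widetilde z(0) = \begin{pmatrix} -\dot x_{0,2}(0) \\ \dot x_{0,1}(0) \end{pmatrix},
\]
so that $\det(\widehat z(0), \widetilde z(0)) = -\dot x_{0,1}(0)^2 - \dot x_{0,2}(0)^2 = -\|\dot x_0(0)\|^2 \neq 0$, the inequality because $x_0$ is a nonconstant cycle. Linear independence of two solutions of a linear ODE at one point propagates to every $\theta \in \mathbb{R}$ by uniqueness, and the proof is complete. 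I anticipate no real obstacle: the whole argument is a short calculation driven entirely by~(\ref{s3}), which is exactly why the paper advertises the proof as immediate.
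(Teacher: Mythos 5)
Your argument is the natural one and is correct in substance; the paper itself gives no proof (it declares the claim immediate and cites \cite[Lemma~4.4]{tmms}), and the implied verification is exactly the componentwise check you carry out: in $\mathbb{R}^2$ the rotation $u\mapsto(u_2,-u_1)$ maps solutions of $\dot u=J(t)u$ to solutions of $\dot z=-J(t)^*z$ precisely when $\operatorname{tr}J(t)\equiv 0$, which is (\ref{s3}); applying this once with $u=y$ and once (with the opposite sign of the rotation, which is immaterial by linearity) with $u=\dot x_0$, using $\ddot x_0=f'(x_0)\dot x_0$, gives the two adjoint solutions.

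One small arithmetic slip in the independence step: with $\widehat z(0)=(\dot x_{0,1}(0),\dot x_{0,2}(0))$ and $\widetilde z(0)=(-\dot x_{0,2}(0),\dot x_{0,1}(0))$ one has
$$
\det\bigl(\widehat z(0),\widetilde z(0)\bigr)
=\det\begin{pmatrix}\dot x_{0,1}(0) & -\dot x_{0,2}(0)\\ \dot x_{0,2}(0) & \dot x_{0,1}(0)\end{pmatrix}
=\dot x_{0,1}(0)^2+\dot x_{0,2}(0)^2=+\|\dot x_0(0)\|^2,
$$
not $-\|\dot x_0(0)\|^2$. This does not affect the conclusion (only nonvanishing is needed), but the sign should be corrected. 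Your observation that only (\ref{s3}) enters the computation is also right, and one can say a bit more: since (\ref{np}) is Hamiltonian, $f=(\partial_2 H,-\partial_1 H)$ forces $(f_1)'_{(1)}+(f_2)'_{(2)}\equiv 0$ automatically, so (\ref{s3}) is not an independent hypothesis for this lemma; the reflection symmetries (\ref{s1})--(\ref{s2}) play no role in the verification and serve only in the subsequent rewriting of $M^\bot$.
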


Lemma~\ref{sss} allows us to rewrite the multivalued map
$M^\bot:[0,T]\to K(\mathbb{R})$ defined in (\ref{M}) as follows
\begin{eqnarray*}
  M^\bot(t)&=&\{
  \dfrac{\dot x_{0,1}(t_*)}{y_1(T+t_*)}\int_{t-T}^t {\rm
  det}\left(-y(\tau),h(\tau)\right)d\tau: \\
  && h\in L^\infty([-T,T],\mathbb{R}^2):h(t)\in g(t,x_0(t),0)\ {\rm for\ a.a\
  }t\in[-T,T]\}.\label{M1}
\end{eqnarray*}
where $t_*\in [0,T]$ is such that $\dot x_{0,2}(t_*)=0.$ Therefore
Theorem~\ref{thm3} takes the form of the following
Theorem~\ref{thm4} when the symmetry assumptions
(\ref{s1})-(\ref{s3}) are satisfied. In particular, observe that the
statement of Theorem~\ref{thm4} refers only to the linearized system
(\ref{ls}) and not to the adjoint system (\ref{ss}).

\begin{theorem}\label{thm4} Assume $f\in
C^1(\mathbb{R}^2,\mathbb{R}^2)$ and
$g:\mathbb{R}\times\mathbb{R}^2\times[0,1]\to K(\mathbb{R}^2)$ upper
semicontinuous and satisfying (H).
 Let $x_\varepsilon$ be a $T$-periodic solution to
perturbed system (\ref{ps}) satisfying
\begin{equation*}
  \|x_\varepsilon(t)- {x}_0(t)\|\to 0\mbox{\quad as}\quad\varepsilon\to
  0
\end{equation*}
uniformly with respect to $t\in[0,T],$ where ${x}_0$ is a
nondegenerate $T$-periodic cycle of the Hamiltonian system
(\ref{np}). Let $y$ be the solution of the linearized system
(\ref{ls}) with the initial condition (\ref{Y}). Then there exists a
family $\{\Delta_\eps\}_{\eps>0}$ such that $\Delta_\eps\to 0$ as
$\eps\to 0$ and
$$
  x_\eps(t+\Delta_\eps)-x_0(t)\in\eps M^\bot(t)y(t)+  \left<\left(\begin{array}{c}  y_2(t)
 \\ -  y_1(t)\end{array}\right),x_\eps(t+\Delta_\eps)-x_0(t)\right>\dot
  x_0(t)+o(\eps),
$$
uniformly with respect to $t\in[0,T].$
\end{theorem}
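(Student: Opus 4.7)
The plan is to obtain Theorem~\ref{thm4} as a direct specialization of Theorem~\ref{thm3}, invoking Lemma~\ref{sss} to identify two explicit linearly independent adjoint solutions and then using the rewriting of $M^\bot$ in the determinant form (\ref{M1}).

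I would begin by applying Theorem~\ref{thmI} to produce the family $\{\Delta_\eps\}_{\eps\in(0,\eps_0]}$ with $\Delta_\eps\to 0$ satisfying the uniform bound $\|x_\eps(t+\Delta_\eps)-x_0(t)\|\le M\eps$; the hypotheses of Theorem~\ref{thmI} are a subset of those assumed here, and this is the same family that Theorem~\ref{thm3} operates on. Next I would use Lemma~\ref{sss} to take as adjoint basis
$$
\widetilde z(t)=\begin{pmatrix}-\dot x_{0,2}(t)\\ \dot x_{0,1}(t)\end{pmatrix},\qquad \widehat z(t)=\begin{pmatrix}y_2(t)\\ -y_1(t)\end{pmatrix},
$$
where $y$ is the solution of (\ref{ls}) subject to (\ref{Y}). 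Selecting $t_*\in[0,T]$ with $\dot x_{0,2}(t_*)=0$ makes $\widetilde z_1(t_*)=0$, which is exactly the setup fixed at the beginning of Section~3, and since $\widehat z(0)=\dot x_0(0)$ the normalization $\langle\dot x_0(0),\widehat z(0)\rangle=1$ required by Remark~\ref{remark3} can be enforced without loss of generality by rescaling.

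Substituting these adjoint solutions into the conclusion of Theorem~\ref{thm3} then yields
$$
\lim_{\eps\to 0}\rho\Bigl(x_\eps(t+\Delta_\eps)-x_0(t),\,\eps M^\bot(t)y(t) + \bigl\langle(y_2(t),-y_1(t))^{*},x_\eps(t+\Delta_\eps)-x_0(t)\bigr\rangle\,\dot x_0(t)\Bigr)=0
$$
uniformly on $[0,T]$, which is precisely the inclusion asserted in Theorem~\ref{thm4} up to the advertised $o(\eps)$ remainder. What is left is to confirm that, under these choices, the general expression (\ref{M}) for $M^\bot$ simplifies to the determinant form (\ref{M1}) stated in the paper. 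This reduces to two elementary identities: the rotation relation $\langle -\widehat z(\tau),h(\tau)\rangle = \det(-y(\tau),h(\tau))$ (up to sign, immediate from $\widehat z=(y_2,-y_1)^{*}$), and the coefficient identification $\gamma(t_*) = \widetilde z_2(t_*)/(\widehat z_2(T+t_*)-\widehat z_2(t_*)) = \dot x_{0,1}(t_*)/y_1(T+t_*)$, which after a harmless time-shift placing $t_*=0$ uses the symmetry-induced $y_1(0)=0$ inherited from the initial datum (\ref{Y}). The main obstacle is this coefficient bookkeeping; conceptually, once Lemma~\ref{sss} is in place, everything else is a direct substitution into Theorem~\ref{thm3}.
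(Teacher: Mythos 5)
Your proposal is correct and follows the same route the paper takes (in fact the paper does not write out a separate proof at all; it only remarks that Theorem~\ref{thm3} ``takes the form of'' Theorem~\ref{thm4} once Lemma~\ref{sss} supplies the explicit adjoint solutions $\widetilde z=(-\dot x_{0,2},\dot x_{0,1})^*$, $\widehat z=(y_2,-y_1)^*$ and $M^\bot$ is rewritten in determinant form). Your coefficient check for $\gamma(t_*)$ and the rotation identity are the right verifications; the only place your argument is a bit cavalier is the clause ``the normalization $\langle\dot x_0(0),\widehat z(0)\rangle=1$ can be enforced by rescaling'': with $\widehat z(0)=\dot x_0(0)$ one actually gets $\langle\dot x_0(0),\widehat z(0)\rangle=\|\dot x_0(0)\|^2$, and rescaling $\widehat z$ to fix this would destroy the identity $\widehat z=(y_2,-y_1)^*$ that the final formula relies on, so this inherited normalization convention deserves a more careful word than ``harmless'' --- but this is an issue present in the paper's own statement rather than a defect of your reasoning.
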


\subsection*{Acknowledgement} The first author was supported by
the Grant BF6M10 of Russian Federation Ministry of Education and
U.S. CRDF (BRHE), by RFBR Grants 09-01-90407, 09-01-00468, by the
President of Russian Federation Young PhD Student grant
MK-1620.2008.1 and by "Researchers Mobility in the Field of
Scientific and Cultural Cooperation Programmes" of the University
of Modena and Reggio Emilia.

The second author was supported by the national research project
PRIN ``Ordinary Differential Equations and Applications''

Finally, the third author was supported by the national research
project PRIN ``Control, Optimization and Stability of Nonlinear
Systems: Geometric and Topological Methods''.


\begin{thebibliography}{99}
\bibitem{AC} Aubin,~ J.P. and Cellina,~ A.,  {\it Differential inclusions},
Grundlehren der mathematischen Wissenschaften, 264, Springer-Verlag,
1984.

\bibitem{awr} Awrejcewicz,~ J. and  Lamarque,~ C.-H.,  {\it Bifurcation and chaos in
nonsmooth mechanical systems}, World Scientific Series on Nonlinear
Science, Series A-45, World Scientific Publishing Co., 2003.

\bibitem{awr1} Awrejcewicz,~ J. and Holicke,~ M.M., {\it Smooth and
nonsmooth high dimensional chaos and the Melnikov-type methods},
World Scientific Series on Nonlinear Science, Series A-60, World
Scientific Publishing Co., 2007.

\bibitem{obu}
Borisovich,~Yu. G., Gelman,~B.D., Myshkis, A.D. and Obukhovski\u\i,
V.V., {\it Introduction to the theory of multivalued mappings},
Voronezhski\u\i $\;$ Gosudarstvenny\u\i $\;$ Universitet, Voronezh,
1986.

\bibitem{crit1}
Chicone,~ C. and Jacob,~ M.,  Bifurcations of critical periods for
plane vector fields, {\it Trans. Amer. Math. Soc.} 312 (1989),
433--486.

\bibitem{dem}
Demidovich,~ B.P., {\it Lectures on the mathematical theory of
stability} (in Russian). Izdat. Nauka, Moscow, 1967.

\bibitem{fec}
Fe\v ckan,~ M.,  Bifurcation of periodic solutions in differential
inclusions, {\it Appl. Math.} 42 (1997), 369--393.

\bibitem{fil}
Filippov,~ A. F.,  On some questions in the theory of optimal
regulation: existence of a solution of the problem of optimal
regulation in the class of bounded measurable functions (in
Russian). {\it Vestnik Moskov. Univ. Ser. Mat. Meh. Astr. Fiz. Him.}
(1959), 25--32.

\bibitem{fil1}
Filippov,~ A. F., {\it Differential equations with discontinuous
righthand side}, Mathematics and its Applications, Kluwer Academic
Publishers, Dordrecht, The Netherlands, 1988.

\bibitem{guck}
Guckenheimer,~ J. and Holmes,~ P., {\it Nonlinear oscillations,
dynamical systems, and bifurcations of vector fields}, Revised and
corrected reprint of the 1983 original. Applied Mathematical
Sciences No. 42, Springer-Verlag, New York, 1990.

\bibitem{nach}
Kamenskii,~ M. Makarenkov,~ O. and Nistri,~ P.,  A continuation
principle for a class of periodically perturbed autonomous systems,
{\it Math. Nachr.} 281 (2008), 42--61.

\bibitem{koz} Kamenskii,~ M.,  Obukhovskii,~V.V. and Zecca,~P.,
{\it Condensing Multivalued Maps and Semilinear Differential
Inclusions in Banach Space}, W.~deGruyter, Berlin, (2001).

\bibitem{kolm} Kolmogorov,~A.N. and Fom{\rm i}n,~S.V., {\it Elements of the
theory of functions and functional analysis} (in Russian). Fourth
edition, revised. Izdat. ``Nauka'', Moscow, 1976.

\bibitem{kol}
Kolovski\u\i,~M.Z., An application of the small-parameter method for
determining discontinuous periodic solutions,  in {\it Proc.
Internat. Sympos. Non-linear Vibrations} (in Russian.) Izdat. Akad.
Nauk Ukrain. SSR, Kiev. I (1961), 264--276.

\bibitem{kunze}
Markus,~K., {\it Non-smooth dynamical systems}, Lecture Notes in
Mathematics, No. 1744. Springer-Verlag, Berlin, 2000.



\bibitem{jmaa}  Makarenkov,~O. and Nistri,~P.,
Periodic solutions for planar autonomous systems with nonsmooth
periodic perturbations,  {\it J.~Math. Anal. Appl.} 338 (2008),
1401--1417.

\bibitem{cpaa}
Makarenkov,~O. and  Nistri,~P.,  On the rate of convergence of
periodic solutions in perturbed autonomous systems as the
perturbation vanishes, {\it Commun. Pure Appl. Anal.} 7 (2008),
49--61.

\bibitem{tmms}
Makarenkov,~O.,  Poincar\'e index and periodic solutions of
perturbed autonomous systems, {\it Trudy Moskov. Mat. Ob\v s\v c.}
70 (2008), in press.

\bibitem{mel}
Melnikov,~V. K.  On the stability of a center for time-periodic
perturbations (in Russian). {\it Trudy Moskov. Mat. Ob\v s\v c.} 12
(1963), 3--52.

\bibitem{per}
Perron,~O.,  Die Ordnungszahlen der Differentialgleichungssysteme,
{\it Math. Zeitschr.} 31 (1930), 748--766.

\bibitem{chic}
Rhouma,~M.B.H. and  Chicone,~C., On the continuation of periodic
orbits, {\it Methods Appl. Anal.} 7 (2000), 85--104.



\end{thebibliography}
\end{document}